\documentclass{article}
\usepackage{graphicx}
\usepackage{algpseudocode}
\usepackage{algorithm}
\usepackage{csquotes}

\usepackage{amsmath, amsthm, amssymb}

\newtheorem{proposition}{Proposition}

\newtheorem{lemma}{Lemma}
\newtheorem{remark}{Remark}

\usepackage{caption}
\usepackage{booktabs}
\usepackage[italicdiff]{physics}
\usepackage{subcaption}
\usepackage{xcolor}  

\newcommand{\new}[1]{{\color{black}#1}}

\usepackage{natbib}
\setcitestyle{authoryear,open={(},close={)}, maxcitenames=3} 

\title{How much Data do We Need? \\Sequential Data Collection for\\ Stochastic Programming}
\author{Xin Li, Juergen Branke, Xuan Vinh Doan\\
University of Warwick\\
\texttt{\{xin.li.2, juergen.branke, xuan.doan\}@wbs.ac.uk}}

\date{}

\begin{document}
\maketitle
\begin{abstract}
    Data-driven optimization often requires collecting data to estimate uncertain model parameters before solving the underlying decision problem. In practice, however, data acquisition may incur non-negligible costs, making it critical to determine when to stop additional data collection. In this paper, we study an optimal stopping problem for sequential data collection in stochastic optimization under parameter uncertainty. We propose a benefit-driven stopping framework that balances information gain and sampling cost. We model the unknown distribution parameter within a Bayesian learning framework and update beliefs sequentially as new observations are collected. At each iteration, the decision maker evaluates the expected marginal benefit of additional data relative to the unit sampling cost and determines whether to continue sampling or stop and implement the optimization decision. Based on this framework, we develop several stopping policies. The proposed policies are evaluated through a newsvendor problem with exponentially distributed demand. Numerical experiments compare the policies with fixed-budget and hindsight benchmark strategies. The results show that benefit-driven stopping rules can substantially reduce unnecessary data collection while achieving near-optimal decision performance, demonstrating the effectiveness of adaptive stopping in data-driven optimization.
\end{abstract}

\section{Introduction} \label{sec1:introduction}

Mathematical programming models typically rely on input parameters that are either specified by domain experts or estimated from historical data. In practice, these parameters are rarely known with certainty. When the estimated parameters deviate from their true values, the resulting optimization decisions may be sub-optimal \citep{lam2016advanced}. This challenge is commonly referred to as optimization under input uncertainty \citep{birge2011introduction, shapiro2021lectures} and has received increasing attention in stochastic optimization and data-driven decision making.

A natural approach to mitigating input uncertainty is to collect additional data. More data improve parameter estimates and, consequently, decision quality. However, real-world data collection is often costly and subject to practical constraints \citep{ungredda2022bayesian}. \new{In applications such as healthcare, supply chain management, manufacturing, and service operations, collecting additional data can be costly due to testing expenses, operational disruptions, limited access to data, or time constraints \citep{xu2023decision, fu2010endogenous}.} This creates a fundamental trade-off between the value of information obtained from new data and the cost of collecting it. In this paper, we are interested in finding the trade-off between the benefit and the collection cost of new data. 

Most existing studies in data-driven optimization assume that data are given in a single batch and remain fixed throughout the decision-making process \citep{song2019stochastic, hesong2024introductory}. In contrast, many real-world applications allow data to be collected sequentially over time. \new{For example, demand information may be obtained incrementally through repeated market studies or ongoing observations \citep{besbes2013implications}}. As more data are collected, parameter estimates become more accurate, and the resulting decisions improve. With infinite data, the estimates converge to the true parameters, and thus the corresponding solution converges to the true optimal solution \citep{kleywegt2002sample, kim2014guide, he2024stochastic}. Nevertheless, data collection incurs cost, the decision maker (DM) must balance the benefit of reduced input uncertainty against the cost of further data collection. 

To achieve the balance, we formulate the problem as an optimal stopping problem within a dynamic sequential decision-making framework as follows: a system evolves stochastically from one state to another in discrete time steps. At each decision epoch, the system state is characterized by the current information set (e.g., observed data and posterior beliefs). The DM chooses between two actions: stop data collection and implement the current decision, or continue sampling to acquire additional information \citep{ciocan2022interpretable}. Whether the DM chooses to stop or continue, such a decision is dependent on the current state of the system. Hence, the DM must specify a stopping policy, which prescribes the action to be taken (stop/continue) for each state the system may enter. 
\new{In this paper, we propose stopping policies 
designed to determine a cost-effective stopping point for data collection.}

To evaluate the stopping policies, we consider two key components. We first characterize regret, defined as the optimality gap between the current solution and the true optimal decision \citep{he2024stochastic}. Second, we characterize the benefit of data collection via the expected performance gain from the current solution to the next solution (one-step look-ahead) under the true system. Finally, we assume that the cost of data collection increases linearly with the number of data points. We compare the benefits and the costs of data collection to decide whether to continue data collection or stop. The objective is to determine a stopping policy that minimizes the sum of expected regret and sampling cost. 


The contributions of the paper are as follows: 
\begin{itemize}
    \item We formulate a sequential data acquisition problem as an \emph{optimal stopping problem} within stochastic programming under input uncertainty, explicitly accounting for the trade-off between information gain and sampling cost.
    \item We develop stopping policies to effectively balance the expected value of additional data against its cost, with the objective of minimizing the expected sum of decision regret and sampling cost.
    \item We establish structural and theoretical properties of the proposed policies, showing that they provide principled and adaptive rules for deciding when to continue sampling or terminate data collection. 
    \item \new{We validate the proposed framework through numerical experiments on a classic Newsvendor problem}, demonstrating that the policies achieve near-optimal performance while significantly reducing unnecessary data collection compared to benchmark strategies.
\end{itemize}

The structure of this paper is as follows. We first review the relevant literature in Section \ref{sec2:literature review}. Then, the problem statement is discussed in Section \ref{sec3:problem}. Section \ref{sec4:algorithm} develops stopping policies for data collection. 
In Section \ref{sec6:numericalstudy}, we present the numerical experiment results. The paper finishes in Section \ref{sec7:conclusion} with a conclusion and discussion. 


\section{Literature Review} \label{sec2:literature review}

\subsection{Input uncertainty quantification}
Input uncertainty quantification aims to study how the input randomness propagates to the output. Some methods quantify the \new{output variability}, while other methods construct interval estimates that capture the true performance measure with high confidence or credibility, accounting for both stochastic variability and input uncertainty \citep{lam2016advanced}. Such an interval could be a confidence interval (CI) or a credibility interval (CrI) based on the quantification approach. The methods can be divided into frequentist and Bayesian approaches. The frequentist approaches, such as the \new{direct bootstrap \citep{barton1993bootstrap} and delta methods \citep{cheng1997delta}}, are effective for quantifying parametric input uncertainty but are often computationally intensive. The Bayesian approaches can effectively model both parametric and model uncertainty by incorporating the modeler's prior beliefs about the input distributions, such as Bayesian model averaging \citep{chick2001input}. The challenge of Bayesian methods is the computational effort in estimating the posterior distribution when the posterior distribution is not simple and can only be evaluated approximately via simulation methods \citep{song2014advanced}. \citet{lam2015quantifying} propose a frequentist and Bayesian approach to quantify uncertainty in the optimal output from stochastic optimization and adapt these approaches to quantify the uncertainty in the optimality gap. Input uncertainty can be effectively reduced by collecting more data \citep{hesong2024introductory, ungredda2022bayesian, wang2023input}. \new{Hence, we are interested in how reducing input uncertainty through additional data acquisition can improve the solution quality after optimization.}

\subsection{Stochastic optimization}
Traditional deterministic optimization, which assumes all problem parameters are known and fixed values, provides a powerful but limited framework. It yields a single "best guess" solution that is optimal only for the specifically assumed scenario \citep{birge1997stochastic}. However, the reality can naturally deviate from the assumed scenario, and the deterministically optimal solution may perform poorly, become suboptimal, or even infeasible \citep{avriel1970value}. Stochastic optimization (also called stochastic programming), as one of the approaches for optimization under uncertainty, emerges from the necessity to take uncertainty into account in optimization. It provides a systematic and mathematically rigorous framework for making better, more resilient, and robust decisions under uncertainty \citep{kall1994stochastic}. The origin of stochastic optimization is \citet{dantzig1955linear}, who formulates the first two-stage stochastic linear program and demonstrates how uncertain parameters can be incorporated into linear programming models. Stochastic optimization has been studied for several decades, see \citet{birge2011introduction},\citet{powell2019unified}, and \citet{shapiro2021lectures} for a comprehensive review. It has found successful applications across many sectors, for example, energy \citep{pereira1991multi} and supply chain \citep{sodhi2009modeling}, etc. 

There are several approaches to solve stochastic optimization problems, including exact deterministic reformulations, sampling-based methods, heuristic approaches, and stochastic approximation \citet{robbins1951stochastic}. When the true probability distribution of the uncertain parameters is continuous or involves more than a handful of discrete outcomes, solving the exact deterministic equivalent is computationally impossible. Sampling-based approximations provide an alternative by working with a random subset of scenarios. The Sample Average Approximation (SAA) framework establishes consistency and convergence guarantees for solving stochastic optimization problems using Monte Carlo samples \citep{kleywegt2002sample, kim2014guide}. 
It approximates the expected objective by drawing a finite set of i.i.d.\ scenarios and solving the resulting deterministic optimization model via sample averages \citep{mak1999monte, seljom2021sample}. Hence, the solution quality depends critically on the scenarios. Since SAA relies on representative scenarios, scenario construction plays a central role. Existing work in stochastic programming has therefore focused on two directions: \new{scenario generation and scenario reduction \citep{dupavcova2003scenario, seljom2021sample}. Scenario generation} aims to generate scenarios as representative as possible to ensure a good quality of the solution. There are three common methods to generate scenarios: random sampling, moment matching, and scenario tree construction using distance measures such as probability metrics or Wasserstein distances \citep{seljom2021sample}. An emerging method to generate scenarios is a data-driven method, by using generative models to sample scenarios \citep{choi2025generative}. On the other hand, scenario reduction aims to effectively reduce the computational cost of the problem without affecting the solution quality \citep{bertsimas2023optimization}. The sample size governs how well the empirical distribution approximates the true underlying uncertainty. Larger scenario sets lead to statistically better approximations of the expected objective but increase computational complexity. In this paper, we use the stochastic programming approach to solve the optimization under uncertainty problem. 

\subsection{Data collection in Stochastic Optimization} 
Representative scenarios (data) play a fundamental role in stochastic optimization because model accuracy and solution quality depend directly on the quality of the underlying probability distribution. Classical stochastic optimization works with static in-hand data to approximate the true underlying probability distribution, which is often estimated from historical data \citep{shapiro2021lectures}. However, insufficient and flawed data can lead to biased approximations or suboptimal decisions. Errors in distributional estimation propagate into the optimization solution, making data quality and sample size central to performance.

In most stochastic decision problems, decision makers can collect information that would partially or totally eliminate the inherent uncertainty \citep{hausch1983bounds}. \citet{avriel1970value} provides a foundational contribution in the field of stochastic programming by formally defining and quantifying the value of information in optimization under uncertainty. They look at the question of whether it is worthwhile to buy information about the random variable and how much it would be worthwhile to spend. They argue that decisions made without anticipation of uncertain outcomes may be sub‐optimal, and that acquiring additional information about uncertain events can improve expected outcomes. Before this paper, stochastic programming often lacked an explicit decision-analytic way to analyze whether forecasting is worth the money, whether additional sampling is valuable, or whether learning about distributional inputs is economically beneficial. By quantifying the "best possible improvement with perfect information", \citet{avriel1970value} provides a theoretical upper bound of the expected value of perfect information. Some recent studies show the benefit of streaming data on optimization with input uncertainty. \citet{song2019stochastic} develops an SAA framework to handle a sequence of optimization problems with estimated input parameters based on streaming data. ~\citet{liu2019online} propose a two-layer importance sampling framework for online input uncertainty quantification using streaming data. 

Despite extensive work on stochastic optimization, relatively little attention has been given to the data collection within these problems. In practice, decision makers may acquire additional data, often at a cost, to improve their understanding of the underlying uncertainty \citep{hausch1983bounds, aigner2023data}. As more data become available, the estimated probability distribution more closely approximates the true distribution, thereby yielding solutions that better reflect the true optimal decision. Motivated by this, we work on the idea of actively collecting additional data to enhance solution quality in stochastic optimization in our paper.

\subsection{Optimal Stopping}
Optimal stopping studies the problem of deciding when to terminate an information-acquisition or decision process to optimize an expected objective \citep{ciocan2022interpretable}. It provides a natural framework for sequential decision making under uncertainty when delaying action yields additional information but also incurs a cost. Classical treatments are rooted in sequential analysis and dynamic programming, where the decision maker selects a stopping time to optimize an expected reward. 

In stochastic optimization, optimal stopping has been incorporated into multistage decision problems. For example, \citet{pichler2022risk} study optimal stopping in risk-averse stochastic programming and highlight the role of time consistency under coherent risk measures. An early work of \citet{morton1998stopping} proposes stopping rules based on statistical bounds for optimality gaps in sampling-based stochastic programming. In line with this work, \citet{bayraksan2011sequential} develop a sequential sampling procedure for stochastic programming that increases sample size until the estimated optimality gap is sufficiently small with theoretical guarantees on solution quality and termination. These methods provide principled criteria for stopping, but typically focus on controlling estimation error rather than explicitly modeling the value of collecting additional data. 

Therefore, limited attention has been given to optimal stopping for sequential data collection in stochastic programming. Motivated by this gap, we study the optimal stopping problem of determining when to terminate data collection, explicitly accounting for how additional data reduces input uncertainty and improves solution quality. Recognizing that data acquisition is costly, we develop stopping policies that balance solution improvement against sampling cost.

In summary, the existing literature offers powerful tools for modeling and solving stochastic optimization problems, yet it typically treats the underlying data as fixed and exogenous. The dependence of solution quality on data is well understood, but the strategic decision of whether, when, and how much additional data to collect remains largely unexplored. This gap is especially salient in practical settings where data acquisition is costly and incremental. Building on the literature, our work aims to address this limitation by developing a framework for optimal stopping for sequential data collection in stochastic programming problems. By integrating information acquisition with optimization, we aim to provide a more realistic and effective approach for decision-making under uncertainty. 

\section{Problem formulation} \label{sec3:problem}
We first present the underlying stochastic optimization problem under input uncertainty. We then formulate the sequential data collection process as an optimal stopping problem within a dynamic programming framework. Next, we introduce the performance metric used to evaluate stopping policies. Finally, we motivate the stopping criterion that balances the expected benefit of additional data against the associated sampling cost. 

\subsection{Notation}
Notations

$k$: iteration index, $k \in \{0, 1, \ldots, K\}$;

$\tau$: iteration index at which data collection terminates, $\tau \in \{0, 1, \ldots, K\}$.

$\xi$: random input;

$f(x, \xi)$: reward function;

$\theta$: input distribution parameter to be estimated;  

$\theta^*$: true distribution parameter;

$\mathbb{P}[\theta_k]$: input distribution with distribution parameter $\theta_k$ estimated at the $k$-th iteration;

$\mathbb{P}[\theta^*]$: true input distribution with true distribution parameter $\theta^*$;

$x_k$: optimal solution at the iteration of $k$ under estimated distribution $\mathbb{P}[\theta_k]$; 

$x^*$: optimal solution under true distribution $\mathbb{P}[\theta^*]$; 

$n_k$: the size of the new data; 

$N_k$: the size of current data set at iteration $k$, $N_{k+1} = N_k + n_k$; 

$d_i$: new data points, $d_i \in \mathcal{D}$;

$\mathcal{D}_k$: new data set, $\mathcal{D}_k = \{d_1, d_2, \ldots, d_{n_k}\}$;

$\tilde{ \mathcal D}_{k}$: hypothetical new data set, $\tilde{\mathcal{D}}_{k} = \{\tilde{d}_1,\tilde{d}_2, \ldots, \tilde{d}_{{n}_{k}}\}$; 

$S_k$: state of the system at iteration $k$, representing the cumulative data collected up to that iteration data set at iteration $k$, $S_{k+1} = S_k \cup \mathcal{D}_{k}$; 

$\pi$: stopping policy; 

$c_s$: unit data sampling cost; 

$B$: data sampling budget; 

$\gamma$: confidence level. 

\subsection{Our optimal stopping problem}
Here, we first introduce our fundamental optimization problem. We consider an optimization problem to obtain a solution $x$ with the objective to maximize the expected performance. The performance function $f(x,\xi)$ is assumed to be known and in a closed form. $\xi$ is a random vector, following a distribution with a true distribution parameter $\theta^*$. We estimate it from historical data, denoted as $\theta$. With the estimated $\theta$, the problem is formulated as follows: 
\new{
\begin{align} 
    x^*(\theta) \in \arg \max_{x} \mathbb{E}_{\xi \sim \mathbb P[\theta]}[f(x,\xi)]. \label{formu:max problem}
\end{align}
}

We define the optimal stopping problem as a dynamic sequential decision process. The modeling framework depicts dynamic problems as a sequence of states, decisions, and new information. Let $k \in \{0, 1, 2, \ldots, K\}$ be the iteration of a dynamic decision process. Without loss of generality, we assume that any process has to stop after $K$ iterations, i.e., the decision maker is forced to make a stopping decision at iteration $K$ \citep{oh2016characterizing}. For example, in a survey-based data collection task, $K$ may represent the maximum number of recruitment rounds allowed by a fixed budget or project deadline. We describe the overall process as: at the $k$-th iteration, the modeler estimates the parameter of the distribution, denoted by $\theta_k$. With the estimate, the modeler solves the optimization problem and obtains a solution $x_k$. When $k$ is small, $\theta_k$ may be a poor estimate of the true distribution parameter $\theta^*$, possibly resulting in suboptimal solutions. Assuming additional data are available, one may attempt to collect sufficiently large data so that the estimation of $\theta_k$ gets closer to $\theta^*$. At the $k$-th iteration, a new batch of input data can be collected and then joined with previous data. The new data comes from the true distribution, i.e., $\mathbb{P}[\theta^*]$. With the new data in the $(k+1)$-th iteration, the $k$-th iteration estimator, $\theta_k$, is updated to $\theta_{k+1}$. The optimization problem is solved with the updated estimate. With infinite data, $\theta_k$ converges to $\theta^*$, hence the resulting solution converges to the true optimal solution $x^*$.

\new{Let $S_k$ denote the state of the system at iteration $k$, representing the cumulative data collected up to that iteration, where $N_k$ is the total number of observations available at iteration $k$. $S_k := \{\xi_1,\ldots,\xi_{N_k} \}$.} The distribution parameter estimate $\theta_k$, and the optimal solution $x_k = x^*(\theta_k)$, are functions depending on $S_k$. Initially, we have historical data set $S_0$ from the true distribution $\mathbb{P}[\theta^*]$. It is noted that the historical dataset $S_0$ can be empty. Let $\mathcal{A} = \{\mathrm{stop}, \mathrm{continue}\}$ denote the action space of the problem. We denote the action at the $k$-th iteration as $a_k \in \mathcal{A}$. The decision maker observes the state $S_k$ and decides whether to continue or stop data collection. If the decision maker chooses to stop data collection, they receive a final reward, and the problem is terminated. The final reward is the reward obtained from implementing the final solution selected after data collection stops. However, if the decision maker opts to continue, they receive a cost of continuing and the state transitions from $S_k$ to $S_{k+1}$ as follows. The cost of continuing captures the additional resources needed to collect new data, for example, monetary cost.

First, we update the cumulative data set $S_k$. The dataset collected of size $n_k$ is denoted as $\mathcal{D}_k = \{d_1, \ldots, d_{n_{k}}\}$. Hence, the accumulated data set at the iteration $k+1$ is $S_{k+1} = S_k \cup \mathcal{D}_k$. The size of the accumulated data set $S_{k+1}$ is denoted by $N_{k+1}$, where $N_{k+1} = N_k + n_k$. We note that the new data comes from the true distribution $\mathbb{P}[\theta^*]$. 

Second, we update the distribution parameter estimate $\theta_k$. There are two common estimation approaches: frequentist and Bayesian \citep{lam2016advanced}. The frequentist view assumes that the true distribution parameter is fixed but unknown. The distribution parameter $\theta$ is estimated via maximum likelihood estimation (MLE). On the other hand, the Bayesian approach, which we follow here, assumes that the true distribution parameter is random and follows a distribution. \new{It initially follows a prior distribution $p_0$, which is the initial knowledge and belief about the distribution.} At iteration $k$, using the updated data set $S_{k+1}$, the prior distribution is updated to the posterior distribution $\mathbb{P}[\theta_{k+1}]$, as follows.

\begin{align}
    \mathbb{P}[\theta_{k+1} | S_{k+1}] \propto \mathbb{P}[\theta_k] \sum_{i=1}^{n_k} \mathbb{P}[d_i|\theta_k]. \label{formu:bayes update}
\end{align}

Third, we update the optimization solution from $x_k$ to $x_{k+1}$ by solving the optimization problem via stochastic programming. With the updated $\theta_{k+1}$ after data collection, we re-solve the problem and get the updated solution $x_{k+1} =x^*(\theta_{k+1}) \in \arg\max_x \mathbb{E}_{\xi \sim \mathbb P[\theta_{k+1}]}[f(x,\xi)]$.

The focus of our work is to design a stopping policy $\pi$ to determine the \emph{stopping time} $\tau$. The stopping time $\tau$ is a random variable that takes values in $\{0, 1, 2, \ldots, K\}$. With the assumption of $K$, the problem is a finite horizon problem with $K$ iterations, starting at iteration $k = 0$. The stopping policy $\pi$ is a mapping from states to actions, namely $a_k = \pi(S_k)$. At the stopping time $\tau_\pi$, the decision maker terminates data collection. 

\subsection{Stopping policy performance metric}
To evaluate the performance of a stopping policy, we consider two components: solution quality and data sampling cost. First, the solution quality is evaluated via regret. Regret, also referred to as the optimality gap or opportunity cost, is the solution performance loss of not knowing the true distribution. \new{The regret $RE(\theta^*, S_k)$ is determined by the difference in true performance between $x_k$ and the true optimal solution $x^*$: 
\begin{align}
   RE(\theta^*, S_k) = \mathbb{E}_{\xi \sim \mathbb{P}[\theta^*]} [f(x^*, \xi) - f(x_k, \xi)], \label{formu:regret}
\end{align}
}
where $x^*$ is obtained by solving the stochastic programming problem under the true distribution, i.e., $x^* = x^*(\theta^*) \in \arg \max_x \mathbb{E}_{\xi \sim \mathbb{P}[\theta^*]} [f(x, \xi)]$. We note that the true distribution parameter $\theta^*$ is unknown in practice, but it is used for retrospective performance evaluation. 

Second, we assume that the data sampling cost is linear with the number of data points collected. 
By the stopping point $\tau_\pi$ under any policy $\pi$, the number of data samples is $N_{\tau_\pi}$, where $N_{\tau_\pi} = \sum_{k=1}^{\tau_\pi} n_k$. Hence, the cumulative sampling cost $SC_{\tau_\pi}$ is 

\begin{align}
    SC_{\tau_\pi} = c_s N_{\tau_\pi} = c_s \sum_{k=1}^{\tau_\pi} n_k. \label{formu:sampling cost}
\end{align}
where $c_s$ is the unit sampling cost. 

Overall, the performance of a stopping policy is evaluated via the sum of the regret of the solution at the stopping point $\tau_\pi$ and the cumulative sampling cost up to the stopping point $\tau_\pi$, referred to as $RESC_{\tau_\pi}$, is 

\begin{align}
    RESC_{\tau_\pi} = RE(\theta^*, S_{\tau_\pi}) + SC_{\tau_\pi} = \mathbb{E}_{\xi \sim \mathbb{P}[\theta^*]} [f(x^*, \xi) - f(x_{\tau_\pi}, \xi)] + c_s \sum_{k=1}^{\tau_\pi} n_k. \label{formu:performance metric}
\end{align}

\new{The objective of the optimal stopping problem is to find a stopping policy $\pi$ that determines when to stop data collection, yielding the stopping time $\tau_\pi$, such that $RESC_{\tau_\pi}$ can be minimized in expectation. The optimal stopping problem can be formulated as }

\begin{align}
    & \min_{\pi} \int RESC_{\tau_\pi(\omega)} P(\omega) d\omega \nonumber \\
    = & \min_{\pi} \int \left( \mathbb{E}_{\xi \sim \mathbb{P}[\theta^*]} [f(x^*, \xi) - f(x_{\tau_\pi(\omega)}, \xi)] + c_s \sum_{k=1}^{\tau_\pi(\omega)} n_k \right) P(\omega) d\omega. 
\end{align}
\new{where $\omega = \{S_1, S_2, S_3, \ldots, S_{\tau_\pi}\}$ is a sample path and $P(\omega)$ is the probability distribution of the sample path $\omega$. }

\new{We use an example of $\tau_\pi (\tau_\pi \leq K)$ to illustrate the whole process of data collection until the terminal iteration $\tau$ under stopping policy $\pi$. The trajectory of the process up to the iteration $\tau_\pi$ is illustrated in Table~\ref{tab:illustrative example} below.}

\begin{table}[htb]
    \centering
    \caption{Illustrative example when stopping at $\tau_\pi$}
    \label{tab:illustrative example}
    \begin{tabular}{|l c c c c c c c |c|} 
         \hline
         Iteration $k$ & 0 & 1 & \ldots & $k$ & $k+1$ & \ldots & $\tau_\pi$ \\ 
         Action & continue  & continue  & \ldots & continue  & continue  & \ldots & stop   \\
         Cumulative data & $S_0$  & $S_1$ & \ldots & $S_k$ & $S_{k+1}$ & \ldots & $S_{\tau_\pi}$ \\ 
         New data       & $\mathcal{D}_0$  & $\mathcal D_1$ & \ldots & $\mathcal{D}_{k}$ & $\mathcal{D}_{k+1}$ & \ldots & \\ 
         Distribution estimate & $\theta_0$  & $\theta_1$ & \ldots & $\theta_k$ & $\theta_{k+1}$ & \ldots & $\theta_{\tau_\pi}$ \\ 
         Stochastic solution & $x_0$  & $x_1$  & \ldots & $x_k$ & $x_{k+1}$ & \ldots & $x_{\tau_\pi}$   \\ \hline
    \end{tabular}
\end{table}

\subsection{Stopping policy motivation}
Here, we introduce the motivation for our proposed optimal stopping policy. We first introduce the \textit{Cost of actions.} We introduce the cost-to-stop and cost-to-continue formulations under a dynamic programming framework. The cost-to-stop, also referred to as the terminal value function in a minimization problem, is denoted by $C_{\mathrm{stop}}(S_k)$. \new{It is defined as the regret incurred by terminating the data collection process at iteration (k), i.e., $C_{\mathrm{stop}} = \mathrm{RE}(\theta^*, S_k)$}. 
It represents the expected performance loss incurred by stopping data collection at state $S_k$ and adopting the current solution $x_k$. 

The cost-to-continue is denoted by $C_{\mathrm{cont}}(S_k)$. Continuing the data collection incurs an immediate sampling cost, $c_s n_k$. Hence, we define the cost-to-continue as 
\begin{align}
    C_{\mathrm{cont}}(S_k) = c_s  n_k.
\end{align}
In addition, continuing leads to an updated solution based on the augmented dataset $S_{k+1}$ and the regret when one decides to stop in the future.

We define the value function \(V_k(S_k)\) as the minimum expected performance loss at iteration $k$, conditional on the current state \(S_k\). At the terminal period \(K\), the decision maker is forced to stop. Therefore, the boundary condition is
\begin{align}
    V_K(S_K) = C_{\mathrm{stop}}(S_K) = \mathbb{E}_{\xi \sim \mathbb{P}[\theta^*]}  \left[f(x^*,\xi)-f(x_K,\xi)\right].
\end{align}

Let $d =(d_1,\ldots,d_{n_k})$ denote the batch of new observations to be collected at iteration $k$. Assuming the observations are independently generated from the true distribution \(\mathbb{P}[\theta^*]\), we have $d \sim \prod_{i=1}^{n_k}\mathbb{P}[\theta^*]$. For any period \(k < K\), the decision maker chooses between stopping immediately and continuing data collection. Thus, we define the Bellman equation as
\begin{align}
     V_k(S_k) = \min \Bigg\{ C_{\mathrm{stop}}(S_k), \; C_{\mathrm{cont}}(S_k) + \mathbb{E}_{d \sim \prod_{i=1}^{n_k}\mathbb{P}[\theta^*]} \big[ V_{k+1}(S_{k+1}) \big] \Bigg\}. \label{eq:bellman}
\end{align}

The optimal decision at period \(k\) is to stop if
\begin{align}
    C_{\mathrm{stop}}(S_k) \leq C_{\mathrm{cont}}(S_k) + \mathbb{E}_{d \sim \prod_{i=1}^{n_k}\mathbb{P}[\theta^*]} \big[ V_{k+1}(S_{k+1}) \big], 
\end{align} 
and to continue otherwise.

In principle, the optimal policy can be obtained by solving the Bellman equation backward from \(k=K\) to \(k=1\). However, computing the expected loss \(V_{k+1}(S_{k+1})\) of iteration $k+1$ exactly is generally difficult, since it requires evaluating the remaining finite-horizon dynamic program. Hence, we adopt a one-step look-ahead approximation. From Formulation (\ref{eq:bellman}), we can see that $V_{k+1}(S_{k+1})$ has an upper bound of \( C_{stop}(S_{k+1})\), i.e., \(V_{k+1}(S_{k+1}) \leq C_{stop}(S_{k+1})\). Therefore, we approximate the future value function \(V_{k+1}(S_{k+1})\) by the immediate stopping loss at the next iteration:
\begin{align}
    V_{k+1}(S_{k+1}) \approx C_{\mathrm{stop}}(S_{k+1}) = \mathbb{E}_{\xi \sim \mathbb{P}[\theta^*]} \left[f(x^*,\xi)-f(x_{k+1},\xi)\right].
\end{align}

Under this approximation, the decision maker stops at period \(k\) if the immediate stopping loss is no larger than the approximate continuation value:
\begin{align}
    C_{\mathrm{stop}}(S_k)  \leq C_{\mathrm{cont}}(S_k) + \mathbb{E}_{d \sim \prod_{i=1}^{n_k}\mathbb{P}[\theta^*]} \left[ C_{\mathrm{stop}}(S_{k+1}) \right]. 
\end{align}

Therefore, the stopping time $\tau$ induced by a heuristic stopping policy \(\pi\) is defined as the first time in which the above condition holds:
\begin{align}
    \tau_{\pi}
    = \inf \left\{ k \in \{1,\ldots,K-1\}: C_{\mathrm{stop}}(S_k) \leq C_{\mathrm{cont}}(S_k) + \mathbb{E}_{d \sim \prod_{i=1}^{n_k}\mathbb{P}[\theta^*]} \left[ C_{\mathrm{stop}}(S_{k+1})  \right] \right\}. 
\end{align}
If the stopping condition is not satisfied for any \(k < K\), then the decision maker stops at the terminal period \(K\).

\new{ When $C_{\mathrm{cont}}(S_k)=c_s n_k$, we allow the number of newly collected data points at iteration $k$ to be general. The stopping condition can then be written as

\begin{align}
    \mathbb{E}_{\xi \sim \mathbb{P}[\theta^*]}
    \big[ f(x^*, \xi) - f(x_k(\theta_k), \xi) \big] - \mathbb{E}_{d \sim \prod_{i=1}^{n_k}\mathbb{P}[\theta^*]} 
    \Big[ \mathbb{E}_{\xi \sim \mathbb{P}[\theta^*]} \big[f(x^*, \xi) - f(x_{k+1}(\theta_{k+1}(d)), \xi) \big] \Big] \leq c_s n_k. \label{eq:bellman optimal}
\end{align}

Since $\mathbb{E}_{\xi \sim \mathbb{P}[\theta^*]} [ f(x^*, \xi)]$ is a constant and does not depend on the newly collected data $d$, the above condition can be equivalently reformulated as 

\begin{align}
    \mathbb{E}_{d \sim \prod_{i=1}^{n_k}\mathbb{P}[\theta^*]}
    \Big[ \mathbb{E}_{\xi \sim \mathbb{P}[\theta^*]} \big[ f(x_{k+1}(\theta_{k+1}(d)), \xi) - f(x_k(\theta_k), \xi) \big] \Big] \leq c_s n_k. \label{eq:bellman_general_final}
\end{align}

Inequality~\eqref{eq:bellman_general_final} shows that the decision maker should stop collecting data when the expected improvement in solution quality from collecting a batch of $n_k$ new observations is no larger than the corresponding data collection cost $c_s n_k$. 
}





Equation (\ref{eq:bellman optimal}) is the cornerstone of how we design stopping policies. It justifies our stopping mechanism based on a comparison between the expected benefit of collecting additional data and the associated sampling cost. The left-hand side of \eqref{eq:bellman optimal} represents the expected real benefit (improvement) in solution quality, while the right-hand side corresponds to the sampling cost of collecting additional data. The expected real benefit, therefore, serves as a critical stopping criterion for data collection policies and will be further discussed in Section~\ref{sec4:algorithm}.

\new{\begin{remark}[Conservativeness of the heuristic stopping rule] 
The proposed heuristic stopping rule provides a tractable and conservative approximation to the Bellman-optimal stopping rule. Since the exact future value function is difficult to compute, we approximate it with the immediate stopping loss in the next iteration. This approximation makes the continuation option appear weakly more costly than it is under the true Bellman recursion, and therefore tends to favor stopping earlier. In this sense, the heuristic stopping time is expected to be no later than the Bellman-optimal stopping time along the same sample path. 
\end{remark} }


\section{Policy development} \label{sec4:algorithm}
\new{In this section, we develop stopping policies for sequential data collection. Since the true data-generating distribution is unknown, the real benefit of collecting additional data cannot be evaluated directly. Instead, the decision maker updates the belief about $\theta^*$ using the available data. Under this belief, the benefit of data collection becomes a random quantity induced by the posterior distribution of $\theta$. This motivates us design stopping policies that use summary measures of the random benefit, such as its posterior mean or credible interval, as stopping indicators.
We first introduce the benefit measure in Section~\ref{subsec:benefit measure}, and then present several policy designs based on different approximations of the unknown real benefit.}

\subsection{Policy stopping measure} \label{subsec:benefit measure}


When the true distribution parameter $\theta^*$ is known, we define the \emph{real benefit} of data collection as the actual performance improvement obtained after collecting additional data. Suppose that the newly collected data at stage $k+1$ has size $n_{k+1}$. \new{We assume that new data are generated from the same distribution as the historical observations of $\xi$, namely, $\mathbb{P}[{\theta^*}]$.} 
\new{The batch of new observations $\mathcal{D}_{k} = \{d_1,\ldots,d_{n_k}\}$ at iteration $k$ are independently generated from the true distribution \(\mathbb{P}[\theta^*]\), i.e., $d \sim \prod_{i=1}^{n_k}\mathbb{P}[\theta^*]$. $d = (d_1,\ldots,d_{n_k})$. }
The cumulative data set is then updated as $S_{k+1} = S_k \cup \mathcal{D}_{k+1}$. Let $x_k$ and $x_{k+1}(d)$ denote the stochastic solutions obtained based on state $S_k$ and $S_{k+1}$, respectively. 
The real benefit of data collection, denoted by $RB(\theta^*, S_k)$, is defined as the reduction in regret achieved after incorporating the newly observed data, i.e., 

\begin{align}
      RB(\theta^*, S_k) & = \mathbb{E}_{d \sim \prod_{i=1}^{n_k}\mathbb{P}[\theta^*]} [\mathrm{RE}(\theta^*, S_k) - \mathrm{RE}(\theta^*, S_{k+1})] \nonumber \\
      & = \mathbb{E}_{d \sim \prod_{i=1}^{n_k}\mathbb{P}[\theta^*]}[\mathbb{E}_{\xi \sim \mathbb{P}[\theta^*]} [f(x_{k+1}(d),\xi) - f(x_{k},\xi)]]. \label{formu:real benefit}
\end{align}
where $RE(\theta^*, S_k)$ and $RE(\theta^*, S_{k+1})$ represent the regrets of $x_k$ and $x_{k+1}(d)$ under the true distribution parameter $\theta^*$, respectively.

However, the true distribution parameter $\theta^*$ is not known in reality. Hence, we introduce the \emph{estimated benefit} or gain from hypothetically collecting additional data. At iteration $k$, we first estimate the distribution parameter $\theta_k$ given the available data, resulting the estimated distribution $\mathbb{P}[\theta_k]$. Then we hypothetically sample $n_k$ new data from the estimated distribution $\mathbb{P}[\theta_k]$. The new data set is $\tilde{\mathcal{D}}_k = \{\tilde{d}_{n_k}\}$, where each data $\tilde{d}_i$ is sampled from the estimated distribution $\mathbb{P}[\theta_k]$. The cumulative data size is updated to $N_{k+1} = N_k + n_k$. With the dataset $\tilde{S}_{k+1} = S_k \cup \tilde{\mathcal{D}}_k$, the updated estimated distribution parameter and distribution are $\tilde{\theta}_{k+1}$ and ${\mathbb{P}[\tilde{\theta}_{k+1}]}$. The performance of the hypothesized solution $\tilde{x}_{k+1}$ is $\mathbb{E}_{\xi \sim \mathbb{P}[\tilde{\theta}_{k+1}]}[f(\tilde{x}_{k+1},\xi)]$. To evaluate how good the solution $x_k$ performs under the updated estimated distribution $\mathbb{P}[\tilde{\theta}_{k+1}]$, we have $\mathbb{E}_{\xi \sim \mathbb{P}[\tilde{\theta}_{k+1}]}[f(x_k,\xi)]$. \new{Built on the performance difference, the estimated real benefit is $RB(\theta_k, S_k)$, by replacing $\theta^*$ with $\theta_k$ in Formulation (\ref{formu:real benefit}). 

}

We adopt this estimated benefit as the \new{policy stopping indicator} since the true underlying distribution is unknown. The estimated benefit quantifies the potential value of additional data collection based on current information. It therefore serves as a forward-looking measure to help the decision maker determine whether initiating further data collection is worthwhile. 

A Bayesian approach is adopted to estimate the input distribution parameter $\theta$. Under this framework, the input parameter $\theta$ is unknown and treated as a random variable. Given a prior distribution $p_0$, the posterior distribution $\mathbb{P}[\theta \mid S_k]$ is updated after collecting $N_k$ additional data points. We focus on estimating two quantities: the distribution parameter $\theta$ and the benefit measure. Based on these estimates, we develop the following four stopping policies. For parameter estimation, we propose two policies: the Parameter Mean policy (PM) policy and the Parameter Credibility Interval policy (PI) policy, presented in Sections \ref{subsec:parameter mean} and \ref{subsec:parameter interval}, respectively. For the benefit measure, we propose two additional policies: the Measure Mean policy (MM) policy and the Measure Credibility Interval policy (MI) policy, described in Section \ref{subsec:measure mean} and \ref{subsec:measure interval}, respectively.

\subsection{Parameter Related Policies} 
Suppose we have a prior belief about the distribution of $\theta$: $\theta \sim \Gamma(\alpha_0, \beta_0)$, where $\alpha_0$ and $\beta_0$ are the shape and rate parameters of the Gamma distribution, respectively. After having observed $N_k$ i.i.d.\ samples at iteration $k$, we update the prior distribution to the posterior distribution 
$\theta \mid \xi_{1:N_k} \sim \Gamma(\alpha_0 + N_k, \beta_0 + \sum_{i=1}^{N_k} \xi_i)$.

We adopt a conjugate prior so that the posterior distribution can be analytically updated. The prior parameters are set to be noninformative, 
i.e., $\alpha_0 = 0$ and $\beta_0 = 0$. After collecting data up to iteration $k$, the posterior parameters are updated to $\alpha_k$ and $\beta_k$, so that $\theta \sim \Gamma(\alpha_k, \beta_k)$. The probability density function of the Gamma distribution is given by
$f(\theta) = \frac{\beta^{\alpha}}{\Gamma(\alpha)} \theta^{\alpha-1} e^{-\beta \theta}$ with $\theta > 0.$. 

Based on the posterior distribution, we estimate the input parameter in two ways: the posterior mean and the posterior credibility interval. These lead to two stopping policies: the Parameter Mean policy and the Parameter Credibility Interval policy. The two policies are described in the following subsections.

\subsubsection{Parameter mean policy} \label{subsec:parameter mean}
\textbf{Parameter Mean Policy (PM)}. We estimate the distribution parameter $\theta$ by its posterior mean at iteration $k$, denoted by $\bar{\theta}_k$. Given the posterior distribution $\theta \sim \Gamma(\alpha_k,\beta_k)$, the posterior mean is $\bar{\theta}_k = \frac{\alpha_k}{\beta_k}$.

We then substitute $\bar{\theta}_k$ into the benefit function in Formulation~(\ref{formu:real benefit}), yielding $RB(\bar{\theta}_k, S_k)$. To determine whether to stop data collection, we compare $RB(\bar{\theta}_k, S_k)$ with the sampling cost $c_s n_K$. If $RB(\bar{\theta}_k, S_k) < c_s n_k$, data collection stops; otherwise, it continues. The corresponding procedure is summarized in Algorithm~\ref{alg:parameter mean}.

\begin{algorithm}[htbp]
\caption{Parameter Mean (PM) Stopping Policy: Sequential data collection when estimating the posterior mean of the random true parameter.}
\label{alg:parameter mean}
\begin{algorithmic}[1]
    \State \textbf{Initialize:} Set iteration index $k \gets 0$, cost $b \gets 0$, $\alpha_0 \gets 0,\beta_0 \gets 0$
    \State Collect initial dataset $S_0$ from the true distribution, update the posterior distribution $\theta \sim \Gamma(\alpha_0,\beta_0)$, compute the distribution estimator $\bar{\theta}_0 \gets \mathbb{E}[\theta]$, obtain the initial decision $x_0$, compute the estimated benefit of the mean $RB(\bar{\theta}_0, S_0)$. 
     \While{$RB(\bar{\theta}_k, S_k) \geq c_s n_k$}
        \State Collect data set $\mathcal{D}_k$ of size $n_k$ from true distribution
        \State Update dataset: $S_{k+1} \gets S_k \cup \mathcal{D}_k$
        \State Update the posterior distribution $\theta \sim \Gamma(\alpha_{k+1}, \beta_{k+1})$
        \State Update the distribution estimator $\bar{\theta}_{k+1} \gets \frac{\alpha_{k+1}}{\beta_{k+1}}$
        \State Compute the decision $x_{k+1}$ under $\bar{\theta}_{k+1}$
        \State Compute the estimated benefit: $RB(\bar{\theta}_k, S_k) \gets RB(\bar{\theta}_{k+1}, S_{k+1})$
        \State Accumulate cost: $b \gets b + c_s n_k$
        \State Increment iteration index: $k \gets k + 1$
    \EndWhile
    \State \textbf{Return:} final recommended decision $x_k$ 
\end{algorithmic}
\end{algorithm}

\subsubsection{Parameter credibility interval policy} \label{subsec:parameter interval}
\textbf{Parameter Credibility Interval Policy (PI)}. 
Using the Bayesian posterior distribution, we construct a $100(1-\gamma)\%$ credibility interval for $\theta$. Since $2\beta_k\theta$ follows a chi-square distribution
with $2\alpha_k$ degrees of freedom, the credibility interval is shown in Formulation (\ref{Parameter CrI}). 
\begin{align}
    P\!\left(\frac{\chi^2_{2\alpha_k,\frac{\gamma}{2}}}{2\beta_k} \le \theta \le \frac{\chi^2_{2\alpha_k,1-\frac{\gamma}{2}}}{2\beta_k} \right)= 1 - \gamma .  \label{Parameter CrI}
\end{align}

We denote the interval at iteration $k$ as $[l_k,u_k]$, where
\[l_k=\frac{\chi^2_{2\alpha_k,\frac{\gamma}{2}}}{2\beta_k}, \qquad u_k=\frac{\chi^2_{2\alpha_k,1-\frac{\gamma}{2}}}{2\beta_k}.\]

This interval implies that, given the prior distribution and the observed data, the true parameter $\theta$ lies within $[l_k, u_k]$ with probability $100(1-\gamma)\%$.

Based on this interval, we bound the estimated benefit by varying
$\theta$ within $[l_k,u_k]$. The upper bound of the estimated benefit is
\begin{align}
    URB_k = \max_{\theta \in [l_k, u_k]} RB(\theta, S_k) \label{eq:UpperEstimatedBenefit}
\end{align}

Assuming a conservative decision maker, the stopping rule compares $URB_k$ in Formulation (\ref{eq:UpperEstimatedBenefit}) with the sampling cost $c_s n_k$. Data collection continues if $URB_k \ge c_s n_k$ and stops otherwise. Let $\tau$ denote the stopping iteration, defined as the first $k$ such that $URB_k \leq c_s n_k $. The procedure is summarized in Algorithm~\ref{alg:parameter interval}.


\begin{algorithm}[htbp]
\caption{Parameter Credibility Interval (PI) Stopping Policy: Sequential data collection when estimating the credibility interval of the random true parameter.} 
\label{alg:parameter interval}
\begin{algorithmic}[1]
    \State \textbf{Initialize:} Set iteration index $k \gets 0$, cost $b \gets 0$. 
    \State Collect initial dataset $S_0$, compute $\theta_0 \gets \frac{\alpha_0}{\beta_0}$, obtain the initial decision $x_0$ under $\theta_0$, construct \new{$100(1-\gamma)\%$} credibility interval for $\theta$: $[l_0, u_0]$, and compute expected benefit upper bound $URB_0$ 
    \While{$URB_k \geq c_s n_k$}
        \State Collect data set $\mathcal{D}_k$ of size $n_k$ from true distribution
        \State Update dataset: $S_{k+1} \gets S_k \cup \mathcal{D}_k$
        \State Update posterior distribution $\alpha_{k+1}$ and $\beta_{k+1}$, get $\theta_{k+1} \gets \frac{\alpha_{k+1}}{\beta_{k+1}})$
        \State Compute decision $x_{k+1}$ under $\theta_{k+1}$
        \State Accumulate cost: $b \gets b + c_s n_k$
        \State Recompute the $100(1-\gamma)\%$ credibility interval $[l_{k+1}, u_{k+1}]$ and update the upper bound of the expected estimated benefit bound $URB_{k+1}$
        \State Increment iteration index: $k \gets k + 1$
    \EndWhile
    \State \textbf{Return:} final recommended decision $x_k$ 
\end{algorithmic}
\end{algorithm}

\subsection{Measure Related Policies}
In this subsection, we focus on the estimated benefit function, $RB(\theta_k, S_k)$ at iteration $k$. 
Since the distribution parameter $\theta$ follows a posterior distribution, $RB(\theta_k, S_k)$ itself becomes a random variable. Given the posterior distribution $\theta \sim \Gamma(\alpha_k,\beta_k)$, we estimate the benefit in two ways: the posterior mean of the benefit and a credibility interval of the benefit. Based on these two approaches, we develop two policies: the Measure Mean policy and the Measure Credibility Interval policy.

\subsubsection{Measure mean policy} \label{subsec:measure mean}
\textbf{Measure Mean Policy (MM)}. In this policy, we use a Bayesian framework to update our belief about the true parameter $\theta$. The posterior mean of the estimated benefit is used as the stopping criterion.  
$\alpha_0=0$ and $\beta_0=0$.

Using the posterior distribution for $\theta$, the posterior mean of the estimated benefit, $MRB_k$, is given by

\begin{align}
    & MRB_k = \mathbb{E}_{\theta \sim \Gamma(\alpha_k, \beta_k)} [\mathrm{RB}(\theta, S_k)]. \label{eq:Mean_EB}
\end{align}

\new{$MRB_k$ is computed by integrating the expected benefit $\mathrm{RB}(\theta, S_k)$ over the posterior distribution $\Gamma(\alpha_k,\beta_k)$, thereby averaging the benefit across all plausible parameter values under the current posterior. In our test problem in Section \ref{sec6:numericalstudy}, this posterior expectation admits a closed-form analytical expression.}

To determine whether to continue data collection, we compare $MRB_k$ with the sampling cost $c_s n_k$. If $MRB_k < c_s n_k $, data collection stops; otherwise, it continues. The procedure is summarized in Algorithm~\ref{alg:measure mean}.

\begin{algorithm}[htbp]
\caption{Measure Mean (MM) Stopping Policy: Sequential data collection when estimating the benefit measure mean with a random true parameter. } 
\label{alg:measure mean}
\begin{algorithmic}[1]
    \State \textbf{Initialize:} Set iteration index $k \gets 0$, cost $b \gets 0$, $\alpha_0 \gets 0,\beta_0 \gets 0$
    \State Collect initial dataset $S_0$, update the posterior distribution $\theta \sim \Gamma(\alpha_0,\beta_0)$, compute the distribution estimator $\theta_0 \gets \mathbb{E}[\theta]$, obtain the initial decision $x_0$, compute the mean of the estimated benefit $MRB_0$. 
    \While{ $MRB_k \geq c_s n_k$ }
        \State Collect data set $\mathcal{D}_k$ of size $n_k$ from true distribution
        \State Update dataset: $S_{k+1} \gets S_k \cup \mathcal{D}_k$
        \State Update the posterior distribution $\theta \sim \Gamma(\alpha_{k+1}, \beta_{k+1})$
        \new{\State Analytically Compute the mean estimated benefit: $MRB_k \gets MRB_{k+1}$}
        \State Accumulate cost: $b \gets b + c_s n_k$
        \State Increment iteration index: $k \gets k + 1$
    \EndWhile
    \State \textbf{Return:} final recommended decision $x_k$
\end{algorithmic}
\end{algorithm}

\subsubsection{Measure credibility interval policy} \label{subsec:measure interval}
\textbf{Measure Credibility Interval Policy (MI)}. Instead of using the mean of the estimated benefit, this policy constructs a credibility interval for the benefit $RB(\theta, S_k)$ and uses its upper bound as the stopping criterion.

Since $\theta \sim \Gamma(\alpha_k,\beta_k)$, the benefit $RB(\theta, S_k)$ becomes a random variable induced by the posterior distribution of $\theta$. To estimate the credibility interval of $RB(\theta, S_k)$, we employ Monte Carlo sampling. Specifically, we draw $M$ independent samples $\theta^{(1)},\dots,\theta^{(M)}$ from $\Gamma(\alpha_k,\beta_k)$. For each sample, we compute the corresponding benefit: $ RB^{(i)} = RB(\theta^{(i)}), i=1,\dots,M$. The resulting benefit values are sorted to obtain the $100 (1-\gamma) \%$ credibility interval of the benefit, whose lower and upper bounds are denoted by $LRB_k^C$ and $URB_k^C$, respectively. 

Assuming a conservative decision maker, we use the upper bound
$URB_k^C$ as the stopping criterion. If $URB_k^C < c_s n_k$, data collection stops; otherwise, it continues. The procedure is summarized in Algorithm~\ref{alg:measure interval}.

\begin{algorithm}[htbp]
\caption{Measure Credibility Interval (MI) Stopping Policy: Sequential data collection when estimating the credibility interval of the benefit measure with a random true parameter.} 
\label{alg:measure interval}
\begin{algorithmic}[1]
    \State \textbf{Initialize:} Set iteration index $k \gets 0$, cumulative cost $b \gets 0$, prior hyperparameter $(\alpha_0,\beta_0)$
    \State Collect initial dataset $S_0$ and update the posterior distribution $\theta \sim \Gamma(\alpha_0,\beta_0)$
    \State Obtain initial decision $x_0$ 
    \While{$URB_k^c \geq c_s n_k$}
        \State Collect data set $\mathcal{D}_k$ of size $n_k$ from true distribution
        \State Update dataset: $S_{k+1} \gets S_k \cup \mathcal{D}_k$
        \State Update posterior parameters $\alpha_{k+1}, \beta_{k+1}$
        \State Draw a sample of size $M$ of $\theta$ from $\Gamma(\alpha_{k+1}, \beta_{k+1})$
        \State Compute benefits $RB_{k+1}$ and update the $100C\%$ determine the credibility bound of benefit $URB_{k+1}^c$
        \State Compute $x_{k+1}$
        \State Accumulate sampling cost: $b \gets b + c_s n_k$
        \State Increment iteration index: $k \gets k + 1$
    \EndWhile
    \State \textbf{Return:} final recommended decision $x_k$
\end{algorithmic}
\end{algorithm}

\paragraph{We also propose a benchmark policy for comparison: the fixed-budget (FB) policy. 
Under the FB policy, the data collection budget is specified in advance, as described in Section~\ref{subsec:fixed}. 
}

\subsection{Fixed budget policy} \label{subsec:fixed}
\textbf{Fixed Budget Policy}. We use the fixed budget policy as a benchmark. Under this policy, a fixed budget of $B$ is allocated for data collection. Data collection continues until the budget is exhausted. The corresponding procedure is presented in Algorithm~\ref{alg:Fixed}.

\begin{algorithm}[htbp]
\caption{Fixed Budget (FB) Stopping Policy: Sequential data collection under sampling budget constraint.} 
\label{alg:Fixed}
\begin{algorithmic}[1]
    \State \textbf{Initialize:} Set budget spent $b \gets 0$ and iteration index $k \gets 0$. 
    \State Sample initial dataset $S_0$ from the true distribution, compute the distribution estimate $\theta_0$, obtain initial solution $x_0$, and compute the regret $RE_0$
    \While{$b < B$}
        \State Collect new data set $\mathcal{D}_k $ of size $n_k$ 
        from the true distribution 
        \State Update dataset: $S_{k+1} \gets S_k \cup \mathcal{D}_k$
        \State Update distribution parameter estimate: $\theta_{k+1}$ from dataset $S_{k+1}$ 
        \State Compute updated solution: $x_{k+1}$
        \State Update budget: $b \gets b + c_s n_k $
        \State Increment iteration index: $k \gets k + 1$
    \EndWhile
    \State \textbf{Return:} final recommended decision $x_k$
\end{algorithmic}
\end{algorithm}

\subsubsection{Two stopping bounds}
Here, we introduce two lower bounds for the RESC defined in Formulation (\ref{formu:performance metric}): hindsight best stopping bound and hindsight best fixed stopping bound. 

\textbf{Hindsight best stopping bound.} 
The hindsight best stopping bound represents the ideal stopping decision that could have been made if full knowledge of the realized trajectory in each replication would be available. Under a fixed budget policy, data collection continues until the budget is exhausted. Then, ex post, we identify the iteration at which the RESC attains its minimum along the realized path. This iteration corresponds to the point at which data collection should have stopped in hindsight. For each replication, we record the minimum RESC value over all iterations. The hindsight best stopping bound is then computed as the average of these minimum RESC values across all replications. In other words, we first take the minimum within each replication and then average these minima over all runs. 

\textbf{Hindsight best fixed stopping bound.} 
In contrast to the hindsight best bound, which allows the stopping time to vary across replications, the hindsight best fixed stopping bound imposes a common stopping iteration for all replications. Specifically, at each iteration, we compute the average RESC across replications. The hindsight best fixed stopping bound is defined as the minimum of these average RESC values over all iterations. This bound, therefore, identifies a single iteration that would have performed best on average, rather than optimizing the stopping point individually for each replication. 


\section{Numerical study} \label{sec6:numericalstudy}
In this section, we use the classic Newsvendor problem to demonstrate the performance of the proposed stopping policies. 
The structure of this section is organized as follows. Section~\ref{subsec:study setting} describes the experimental settings of the Newsvendor problem.
Section~\ref{subsec:analytical formulation} presents the analytical formulations used in the experiments. We then report the numerical results in three parts:
(i) a comparison of the stopping policies in Section~\ref{subsec:stopping}, (ii) experiments under different true parameters in Section~\ref{subsec:unknown true test}, and (iii) the impact of the unit sampling cost in Section~\ref{subsec:sampling cost test}. 

\subsection{Numerical study setting} \label{subsec:study setting}
The Newsvendor problem determines the optimal order quantity that maximizes the expected profit under uncertain demand. We consider a single-product setting without salvage value or shortage penalty. The decision variable $x$ denotes the number of newspapers ordered, and $\xi$ represents the random demand. The profit function $f(x,\xi)$ is given by $f(x,\xi) = r \min(x,\xi) - cx$, where $r$ is the selling price and $c$ is the unit purchasing cost, with $r>c$. In the numerical experiments, we set $r=10$ and $c=3$. The decision maker aims to determine the optimal order quantity $x$ based on historical demand observations to maximize the expected profit $\mathbb{E}_{\xi \sim \mathbb{P}[\lambda]}[f(x,\xi)]$. We assume that the uncertain demand follows an exponential distribution with an unknown parameter $\lambda$, i.e., $\xi \sim \mathrm{Exp}(\lambda)$. The problem is formulated as follows:

\begin{align}
    \max_{x \geq 0} \mathbb{E}_{\xi \sim \mathrm{Exp}[\lambda]} [f(x,\xi)] = r \mathbb{E}_{\xi \sim \mathrm{Exp}[\lambda]} [\min(x,\xi)] - cx. \label{formu:newsvendor}
\end{align}

In the experiments, the true parameter of the exponential distribution is set to $\lambda^* = \frac{1}{120}$. An initial dataset of size $N_0 = 20$ is generated from the true distribution. During the sequential data collection process,
one additional observation is obtained at each iteration, i.e., $n_k = 1$. Each experiment is replicated 100 times to evaluate the performance of the stopping policies. 
\new{To ensure fair comparisons across policies, we use common random numbers so that all policies are evaluated under the same simulated data realizations within each replication.} 

\subsection{Analytical formulations} \label{subsec:analytical formulation}
\subsubsection{Optimal order of exponential Newsvendor}
At iteration $k$, we solve the Newsvendor problem using stochastic optimization to obtain the optimal order quantity $x_k$. In practice, the demand distribution is often not fully known. Instead, the underlying distribution $\mathbb{P}$ can be partially
characterized using a finite set of independent demand observations, which correspond to past realizations of the random demand. Let the set of empirical demand observations be denoted by $\mathcal{N} = \{\hat{\xi}_1, \hat{\xi}_2, \ldots, \hat{\xi}_N\}$, where $N$ represents the number of observed samples. 

\begin{lemma}[Expected minimum under exponential demand] \label{lem:min_demand}
If $\xi\sim \mathrm{Exp}(\lambda)$ and $x\ge 0$, then: 
\begin{align*}
    \mathbb{E}[\min(x,\xi)] = \frac{1-\exp^{- \lambda x}}{\lambda}.
\end{align*}
\end{lemma}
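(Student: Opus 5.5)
The plan is to compute $\mathbb{E}[\min(x,\xi)]$ through the tail-integral representation of the expectation of a nonnegative random variable, which reduces the lemma to integrating the exponential survival function. I will use the parameterisation of $\mathrm{Exp}(\lambda)$ in which the density is $\lambda e^{-\lambda t}$ for $t\ge 0$, so the survival function is $P(\xi>t)=e^{-\lambda t}$. Since $x\ge 0$ and $\xi\ge 0$, the random variable $Y=\min(x,\xi)$ is nonnegative and bounded by $x$. Hence $\mathbb{E}[Y]=\int_0^\infty P(Y>t)\,dt$ holds, and the integral is finite.

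The key step is to identify $P(Y>t)$. For $t<x$ the event $\{\min(x,\xi)>t\}$ coincides with $\{\xi>t\}$, so $P(Y>t)=e^{-\lambda t}$. For $t\ge x$ the probability is zero, because $Y\le x$. Therefore
\begin{align*}
\mathbb{E}[\min(x,\xi)] = \int_0^x e^{-\lambda t}\,dt = \frac{1-e^{-\lambda x}}{\lambda},
\end{align*}
which is the claimed formula, written as $\exp^{-\lambda x}$ in the statement.

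As a cross-check, I would also verify the result by direct splitting:
\begin{align*}
\mathbb{E}[\min(x,\xi)] = \int_0^x t\lambda e^{-\lambda t}\,dt + x\,P(\xi>x).
\end{align*}
Integrating the first term by parts gives $-xe^{-\lambda x}+(1-e^{-\lambda x})/\lambda$. The second term equals $xe^{-\lambda x}$, which cancels the boundary term exactly and returns the same expression. The degenerate case $x=0$ gives $0$ on both sides.

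There is no real obstacle in this argument. The only point needing care is fixing the rate parameterisation, so that the mean is $1/\lambda$ and this matches the paper's choice $\lambda^*=1/120$ with mean demand $120$.
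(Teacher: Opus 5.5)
Your argument is correct, but your main route differs from the paper's. The paper splits the expectation at $x$, writing $\mathbb{E}[\min(x,\xi)]=\int_0^x \xi\lambda e^{-\lambda\xi}\,d\xi+\int_x^\infty x\lambda e^{-\lambda\xi}\,d\xi$, and evaluates both pieces directly. That is exactly your cross-check. The paper leaves the integration by parts and the cancellation of the $xe^{-\lambda x}$ terms implicit, whereas you spell them out. Your primary argument instead uses the tail-integral identity $\mathbb{E}[Y]=\int_0^\infty P(Y>t)\,dt$ for nonnegative $Y$. Its only input is the survival function $e^{-\lambda t}$, so no integration by parts or boundary-term bookkeeping is needed. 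The price is that you rely on that identity, which is a Tonelli argument, while the paper's computation uses nothing beyond the density. The gain is generality: your route shows $\mathbb{E}[\min(x,\xi)]=\int_0^x P(\xi>t)\,dt$ for any nonnegative demand. Differentiating in $x$ then gives $\frac{d}{dx}\mathbb{E}[\min(x,\xi)]=P(\xi>x)=e^{-\lambda x}$, which is exactly what underlies the first-order condition $re^{-\lambda x}=c$ in Lemma~\ref{lem:optimal_order}.
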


\begin{proof}   
    $\mathbb{E}_{\xi \sim \mathbb{P}} [\min(x,\xi)] = \int_0^\infty \min(x,\xi) f(\xi) d\xi$, where $f(x) = \lambda \exp^{- \lambda x} $ is the pdf of the exponential distribution. We have \[\mathbb{E}[\min(x, \xi)] = \int_{0}^{\infty} \min(x, \xi) \lambda e^{-\lambda \xi} d\xi = \int_{0}^{x} \xi \lambda e^{-\lambda \xi} d\xi + \int_{x}^{\infty} x \lambda e^{-\lambda \xi} d\xi. \] After taking the integrals of the two parts, we have $\mathbb{E}_{\xi \sim \mathbb{P}} [\min(x,\xi)] = \frac{1}{\lambda} (1- \exp^{- \lambda x})$.
\end{proof}

Hence, the expected objective value under rate $\lambda$ as below 
\begin{align*}
    \Pi(x;\lambda) = \mathbb{E}_{\xi \sim \mathrm{Exp}[\lambda]}[f(x,\xi)] = \frac{r}{\lambda} (1- \exp^{- \lambda x}) - cx. 
\end{align*}

\begin{lemma}[Optimal order for exponential newsvendor] \label{lem:optimal_order}
For $r>c$, the maximizer of $\Pi(x;\lambda)$ is
\begin{align*}
    x^*(\lambda)=\frac{1}{\lambda} \ln\frac{r}{c}. 
\end{align*}
Moreover, at $x^*(\lambda)$ we have $e^{-\lambda x^*(\lambda)}= \frac{c}{r}$. 
\end{lemma}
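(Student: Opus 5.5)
The plan is to start from the closed form supplied by Lemma~\ref{lem:min_demand}, namely $\Pi(x;\lambda)=\frac{r}{\lambda}(1-e^{-\lambda x})-cx$ on $x\ge 0$, and treat the question as a one-dimensional concave maximization. I would first differentiate in $x$ to obtain $\Pi'(x;\lambda)=r e^{-\lambda x}-c$. The second derivative is $\Pi''(x;\lambda)=-r\lambda e^{-\lambda x}$, which is strictly negative for $\lambda>0$. Hence $\Pi(\cdot;\lambda)$ is strictly concave on $[0,\infty)$, and any stationary point in that interval is its unique global maximizer.

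Next I would solve $\Pi'(x;\lambda)=0$, which gives $e^{-\lambda x}=c/r$, and therefore $x=\frac{1}{\lambda}\ln\frac{r}{c}$. Feasibility needs checking: because $r>c>0$, we have $\ln(r/c)>0$, so this point is strictly positive and lies in the interior of $[0,\infty)$, not on the boundary $x=0$. As a consistency check, $\Pi'(0;\lambda)=r-c>0$ and $\Pi'(x;\lambda)\to -c<0$ as $x\to\infty$. This confirms that the maximum is interior and that the boundary cannot be optimal.

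The identity $e^{-\lambda x^*(\lambda)}=c/r$ is then read off directly from the first-order condition, or equivalently obtained by substituting $x^*(\lambda)$ into the exponential.

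I do not expect any real obstacle here. The only step needing care is justifying global optimality on the constrained domain $x\ge0$, and that follows from strict concavity together with the stationary point being interior. I would also note the implicit assumptions $\lambda>0$ and $c>0$, the latter so that $\ln(r/c)$ is finite.
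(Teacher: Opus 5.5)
Your proposal is correct and follows the same route as the paper's proof. Like the paper, you differentiate to get $\Pi'(x;\lambda)=re^{-\lambda x}-c$, solve the first-order condition to obtain $x=\frac{1}{\lambda}\ln\frac{r}{c}$, and use $\Pi''(x;\lambda)=-\lambda r e^{-\lambda x}<0$ to conclude it is a maximum, while also making explicit two points the paper leaves implicit: strict concavity gives global optimality on $[0,\infty)$, and the stationary point is interior because $\ln(r/c)>0$.
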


\begin{proof}
    See Appendix \ref{proof:optimal order}. 
\end{proof}

The solution $x_k$ and true optimal solution $x^*$ can be found by replacing the $\lambda$ with the estimated distribution $\lambda_k$ and true distribution $\lambda^*$, respectively. 

\begin{lemma}[One-step update of the posterior-mean rate estimator]\label{lem:posterior_mean_update}

Let $\xi \mid \lambda \sim \mathrm{Exp}(\lambda)$ and let the prior be $\lambda \sim \Gamma(\alpha_0,\beta_0)$ under the shape and rate parameterization. 
Given $N_k$ observations $\xi_{1:N_k}$ with sum $s_k:=\sum_{i=1}^{N_k}\xi_i$, define the posterior-mean estimator

\[ \lambda_k \;:=\; \mathbb{E}[\lambda \mid \xi_{1:N_k}] \;=\;\frac{\alpha_0+N_k}{\beta_0+s_k}. \]



When $n_k=1$, let $d$ be one additional observation. Then the posterior-mean estimator after incorporating $d$ is
\begin{align*}
    \lambda_{k+1}(d) :=\mathbb{E}[\lambda \mid \xi_{1:N_k},d] =\frac{\alpha_0+N_k+1}{\beta_0+ s_k+d} 
    =\lambda_k\cdot \frac{\alpha_0+N_k+1}{\alpha_0+N_k+\lambda_k d}.
\end{align*}

In particular, in the (improper) flat-prior limit $\alpha_0=0, \beta_0=0$, the posterior mean coincides with the MLE, and
\begin{align*}
    \tilde{\lambda}_{k+1}(d) =\frac{\lambda_k (N_k+1)}{N_k+\lambda_k d},
\end{align*}
where $d\sim \mathrm{Exp}(\lambda_k)$. 
\end{lemma}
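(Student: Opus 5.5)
The plan is to derive everything from Gamma--exponential conjugacy and then do some algebra. First, the likelihood of $N_k$ i.i.d.\ observations from $\mathrm{Exp}(\lambda)$ is $\prod_{i=1}^{N_k}\lambda e^{-\lambda\xi_i}=\lambda^{N_k}e^{-\lambda s_k}$. Multiplying by the $\Gamma(\alpha_0,\beta_0)$ density $\propto \lambda^{\alpha_0-1}e^{-\beta_0\lambda}$ gives a posterior kernel $\lambda^{\alpha_0+N_k-1}e^{-(\beta_0+s_k)\lambda}$. Hence the posterior is $\Gamma(\alpha_0+N_k,\beta_0+s_k)$, matching the update stated in the Parameter Related Policies subsection. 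Since a $\Gamma(\alpha,\beta)$ variable (shape/rate) has mean $\alpha/\beta$, the formula for $\lambda_k$ follows directly.

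Second, I will obtain the one-step update by applying the same conjugacy argument with the single extra observation $d$. The sample size becomes $N_k+1$ and the sum becomes $s_k+d$, which gives the first expression for $\lambda_{k+1}(d)$. For the second expression, I rewrite the denominator using $\beta_0+s_k=(\alpha_0+N_k)/\lambda_k$, valid whenever $\lambda_k>0$. This gives
\begin{align*}
\beta_0+s_k+d=\frac{\alpha_0+N_k+\lambda_k d}{\lambda_k},
\end{align*}
and substituting yields $\lambda_k(\alpha_0+N_k+1)/(\alpha_0+N_k+\lambda_k d)$.

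Third, for the flat-prior limit I set $\alpha_0=\beta_0=0$. The posterior kernel is then $\lambda^{N_k-1}e^{-s_k\lambda}$. This is improper only if $N_k=0$; it is proper for $N_k\ge 1$, which holds here since $N_0=20$. The posterior mean is $N_k/s_k$. Maximizing the log-likelihood $N_k\ln\lambda-\lambda s_k$ gives the MLE $N_k/s_k$, so the two coincide. Plugging $\alpha_0=0$ into the update gives the displayed $\tilde\lambda_{k+1}(d)$. The clause $d\sim\mathrm{Exp}(\lambda_k)$ is not something to prove. It specifies the hypothetical sampling used in the estimated benefit $RB(\lambda_k,S_k)$ of Section~\ref{subsec:benefit measure}, and the algebraic identity holds for any $d\ge 0$.

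The only real care point is the improper prior. I need to note that $N_k\ge1$ and $s_k>0$ almost surely, so that the posterior is a genuine Gamma distribution and $\lambda_k$ is well-defined and positive. Positivity of $\lambda_k$ is what justifies the division in the second step.
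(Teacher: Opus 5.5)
Your proposal is correct and follows the paper's proof essentially step for step: Gamma--exponential conjugacy gives the posterior $\Gamma(\alpha_0+N_k,\beta_0+s_k)$, the same conjugate update with $d$ gives the first expression, and rewriting $\beta_0+s_k=(\alpha_0+N_k)/\lambda_k$ gives the $\lambda_k$-form. Your explicit check of the flat-prior/MLE coincidence, with the properness condition $N_k\ge 1$, $s_k>0$, goes slightly beyond the paper, which leaves that specialization implicit.
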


\begin{proof}
    See Appendix \ref{proof:estimate update}. 
\end{proof}

\subsubsection{Analytical regret} 
Here, we derive the analytical formulation of the regret by taking the newsvendor problem formulation and optimal order in Lemma \ref{lem:optimal_order} under an exponential distribution. With the (expected) regret defined at iteration $k$ in Equation (\ref{formu:regret}), we drive the analytical regret formulation (\ref{eq:analyticalRE}) for the Newsvendor problem in  Proposition \ref{prop:RE_closed_form}. 

\begin{proposition}[Closed-form regret for exponential newsvendor] \label{prop:RE_closed_form}
Under the current setting, the regret admits a closed form
\begin{align}
   \mathrm{RE}(\lambda^*, S_k) = \frac{r}{\lambda^*}\left(\frac{c}{r}\right)^ \frac{\lambda^*}{\lambda_k} -\frac{c}{\lambda^*} + c\left(\frac{\lambda^* - \lambda_k}{\lambda_k \lambda^*}\right)\ln\frac{r}{c} \label{eq:analyticalRE}
\end{align}
\end{proposition}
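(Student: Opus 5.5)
The proof is a direct substitution into the definition of regret in Formulation~(\ref{formu:regret}), using the closed-form objective $\Pi(x;\lambda)$ obtained from Lemma~\ref{lem:min_demand} and the optimal order from Lemma~\ref{lem:optimal_order}. Under the current setting $\mathrm{RE}(\lambda^*,S_k)=\Pi(x^*;\lambda^*)-\Pi(x_k;\lambda^*)$. Here $x^*=x^*(\lambda^*)=\frac{1}{\lambda^*}\ln\frac{r}{c}$, and $x_k=x^*(\lambda_k)=\frac{1}{\lambda_k}\ln\frac{r}{c}$, where $\lambda_k$ is the current estimate, for instance the posterior mean from Lemma~\ref{lem:posterior_mean_update}. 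Both orders are evaluated under the \emph{true} rate $\lambda^*$.

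\textbf{Step 1: the optimal term.} Using $e^{-\lambda^* x^*}=c/r$ from Lemma~\ref{lem:optimal_order}, we get
\[
\Pi(x^*;\lambda^*)=\frac{r}{\lambda^*}\Bigl(1-\frac{c}{r}\Bigr)-\frac{c}{\lambda^*}\ln\frac{r}{c}=\frac{r-c}{\lambda^*}-\frac{c}{\lambda^*}\ln\frac{r}{c}.
\]

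\textbf{Step 2: the plug-in term.} Here the key observation is
\[
e^{-\lambda^* x_k}=\exp\Bigl(-\frac{\lambda^*}{\lambda_k}\ln\frac{r}{c}\Bigr)=\Bigl(\frac{c}{r}\Bigr)^{\lambda^*/\lambda_k}.
\]
Therefore
\[
\Pi(x_k;\lambda^*)=\frac{r}{\lambda^*}\Bigl(1-\bigl(\tfrac{c}{r}\bigr)^{\lambda^*/\lambda_k}\Bigr)-\frac{c}{\lambda_k}\ln\frac{r}{c}.
\]

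\textbf{Step 3: subtract.} The two $r/\lambda^*$ constants cancel, which leaves the term $\frac{r}{\lambda^*}(c/r)^{\lambda^*/\lambda_k}$ and the constant $-c/\lambda^*$. The logarithmic terms combine as
\[
c\ln\frac{r}{c}\Bigl(\frac{1}{\lambda_k}-\frac{1}{\lambda^*}\Bigr)=c\,\frac{\lambda^*-\lambda_k}{\lambda_k\lambda^*}\ln\frac{r}{c}.
\]
This is exactly the right-hand side of (\ref{eq:analyticalRE}).

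There is no real obstacle here. The only points needing care are the sign bookkeeping in Step~3 and the fact that $x_k$ must be evaluated under $\lambda^*$ rather than $\lambda_k$. It is also worth noting, as a check, that the expression vanishes at $\lambda_k=\lambda^*$.
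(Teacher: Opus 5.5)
Your proposal is correct and follows essentially the same route as the paper's proof. You write the regret as $\Pi(x^*;\lambda^*)-\Pi(x_k;\lambda^*)$, use $e^{-\lambda^* x^*}=c/r$ and $e^{-\lambda^* x_k}=(c/r)^{\lambda^*/\lambda_k}$, and combine the logarithmic terms through $1/\lambda_k-1/\lambda^*$ (your check that the expression vanishes at $\lambda_k=\lambda^*$ is a nice addition the paper does not make).
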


\begin{proof}
    See Appendix \ref{proof:analytical regret}. 
\end{proof}

\subsubsection{Analytical benefit}
We now introduce the analytical formulation of real benefit after taking one additional data point.

\begin{proposition}[Analytical real benefit $RB(\lambda^*)$] \label{prop:RB_closed_form}
At iteration $k$, the decision $x_k$ is computed using the current estimate $\lambda_k$ as $ x_k = \frac{\ln\frac{r}{c}}{\lambda_k}$. Suppose $N_k$ data points have been observed so far. After collecting one additional data point $d \sim \mathrm{Exp}(\lambda^*)$, the updated estimator with $\alpha_0 =0$ and $\beta = 0$ is $\lambda_{k+1}(d) = \frac{\lambda_k (N_k+1)}{N_k + \lambda_k d}$, and the updated decision is $x_{k+1}(d)$. Then the real benefit admits the following closed-form expression (\ref{eq:analyticalRB}). 

\begin{align}
     & \mathrm{RB}(\lambda^*, S_k) = \frac{r}{\lambda^*}\left(\left(\frac{c}{r}\right)^ \frac{\lambda^*}{\lambda_k} -\left(\frac{c}{r}\right)^ \frac{\lambda^* N_k}{\lambda_k (N_k+1)} \frac{N_k+1}{N_k+1 - \ln \frac{c}{r}}\right) + c\left(\frac{\lambda^* - \lambda_k}{\lambda^* \lambda_k (N_k+1)}\ln\frac{r}{c}\right). \label{eq:analyticalRB}
\end{align}
\end{proposition}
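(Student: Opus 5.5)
The plan is to start from the definition of the real benefit in Formulation~(\ref{formu:real benefit}). There the constant term $\mathbb{E}_{\xi\sim\mathbb{P}[\lambda^*]}[f(x^*,\xi)]$ cancels, so
\[
\mathrm{RB}(\lambda^*,S_k)=\mathbb{E}_{d\sim \mathrm{Exp}(\lambda^*)}\big[\Pi(x_{k+1}(d);\lambda^*)\big]-\Pi(x_k;\lambda^*).
\]
By Lemma~\ref{lem:min_demand}, $\Pi(x;\lambda^*)=\frac{r}{\lambda^*}(1-e^{-\lambda^* x})-cx$. Writing $L:=\ln\frac{r}{c}$, the benefit therefore splits into an exponential part and a linear part:
\[
\mathrm{RB}=\frac{r}{\lambda^*}\Big(e^{-\lambda^* x_k}-\mathbb{E}_d\big[e^{-\lambda^* x_{k+1}(d)}\big]\Big)-c\Big(\mathbb{E}_d[x_{k+1}(d)]-x_k\Big).
\]
I will evaluate the two pieces separately.

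For the explicit form of the decisions, Lemma~\ref{lem:optimal_order} gives $x_k=L/\lambda_k$. Lemma~\ref{lem:posterior_mean_update} in the flat-prior case gives $\lambda_{k+1}(d)=\lambda_k(N_k+1)/(N_k+\lambda_k d)$. Combining the two,
\[
x_{k+1}(d)=\frac{L(N_k+\lambda_k d)}{\lambda_k(N_k+1)},
\]
which is affine in $d$; this is the key structural fact the whole computation relies on.

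The linear part then only needs $\mathbb{E}[d]=1/\lambda^*$. This gives
\[
\mathbb{E}_d[x_{k+1}]-x_k=\frac{L}{\lambda_k(N_k+1)}\Big(\frac{\lambda_k}{\lambda^*}-1\Big),
\]
so that $-c(\cdot)=c\,\frac{\lambda^*-\lambda_k}{\lambda^*\lambda_k(N_k+1)}L$, which is exactly the last term of (\ref{eq:analyticalRB}).

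For the exponential part, $e^{-\lambda^* x_k}=(c/r)^{\lambda^*/\lambda_k}$ directly. Since $x_{k+1}(d)$ is affine in $d$, I will factor
\[
e^{-\lambda^* x_{k+1}(d)}=\Big(\frac{c}{r}\Big)^{\frac{\lambda^* N_k}{\lambda_k(N_k+1)}}\exp\!\Big(-\frac{L\lambda^*}{N_k+1}\,d\Big).
\]
The remaining expectation is the Laplace transform of $\mathrm{Exp}(\lambda^*)$ evaluated at $t=L\lambda^*/(N_k+1)$, namely $\lambda^*/(\lambda^*+t)=(N_k+1)/(N_k+1+L)$. Rewriting $+L$ as $-\ln\frac{c}{r}$ reproduces the bracketed term of (\ref{eq:analyticalRB}).

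The main obstacle is this step: recognizing that the expectation over $d$ of the exponential term reduces to an exponential moment-generating function, which is possible only because $x_{k+1}$ is linear in $d$. Beyond that, the only work is careful bookkeeping of the exponents of $c/r$ and of the sign conventions in $\ln(c/r)$ versus $\ln(r/c)$. Finiteness is automatic, since $t>0$.
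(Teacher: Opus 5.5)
Your proposal is correct and follows essentially the same route as the paper: both write the benefit as $\frac{r}{\lambda^*}\bigl(e^{-\lambda^* x_k}-e^{-\lambda^* x_{k+1}(d)}\bigr)+c\bigl(x_k-x_{k+1}(d)\bigr)$, substitute the affine-in-$d$ form of $x_{k+1}(d)$, and then take the expectation over $d\sim\mathrm{Exp}(\lambda^*)$. The only difference is that you state explicitly the Laplace-transform step $\mathbb{E}[e^{-td}]=\lambda^*/(\lambda^*+t)$ with $t=L\lambda^*/(N_k+1)$, which the paper performs without comment in its final line.
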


\begin{proof}
    See Appendix \ref{proof:analytical benefit}. 
\end{proof}

Since the true demand distribution $\mathbb{P}[\lambda^*]$ is unknown in practice, we estimate the distribution parameter $\lambda^*$ using a Bayesian approach. The resulting estimate $\lambda$ is then substituted into the real benefit formulation in Equation~(\ref{eq:analyticalRB}) in place of $\lambda^*$, resulting in the formulation of $\mathrm{RB}(\lambda)$ as below: 

\begin{align}
     \mathrm{RB}(\lambda, S_k) = \frac{r}{\lambda}\left(\left(\frac{c}{r}\right)^ \frac{\lambda}{\lambda_k} -\left(\frac{c}{r}\right)^ \frac{\lambda N_k}{\lambda_k (N_k+1)} \frac{N_k+1}{N_k+1 - \ln \frac{c}{r}}\right) + c\left(\frac{\lambda- \lambda_k}{\lambda \lambda_k (N_k+1)}\ln\frac{r}{c}\right). \label{eq:analyticalEB}
\end{align}

\subsubsection{Mean of benefit} 
We then derive the formulation of mean real benefit ($MRB_k$) when the distribution parameter $\lambda$ follows a posterior distribution $\Gamma(\alpha_k, \beta_k)$ at the $k$-th iteration. 

\begin{proposition}[Analytical mean of estimated benefit $MRB$] \label{prop:meanRB_closed_form}
With Lemma~\ref{lem:min_demand}, Lemma~\ref{lem:optimal_order}, Lemma~\ref{lem:posterior_mean_update}, and Equation ~\ref{eq:analyticalEB}, the expected estimated benefit can be formulated analytically as 

\begin{align}
MRB_k 
= r\Bigl[\frac{\beta_k}{\alpha_k-1}\Bigl(\tfrac{\beta_k}{\beta_k - A}\Bigr)^{\alpha_k-1}
- C\frac{\beta_k}{\alpha_k-1}\Bigl(\tfrac{\beta_k}{\beta_k - B}\Bigr)^{\alpha_k-1}\Bigr]
+ \frac{c\ln\frac{r}{c}}{\lambda_kN_{k+1}}\Bigl(1-\lambda_k\frac{\beta_k}{\alpha_k-1}\Bigr), \label{eq:analytical MEB}
\end{align}

where $A=\frac{1}{\lambda_k}\ln\frac{c}{r},
\quad B=\frac{N_k}{\lambda_kN_{k+1}}\ln\frac{c}{r},
\quad C=\frac{N_{k+1}}{N_{k+1}-\ln\frac{c}{r}}$.
\end{proposition}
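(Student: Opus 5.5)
We start from Equation~\eqref{eq:analyticalEB} for $\mathrm{RB}(\lambda,S_k)$ and take its expectation over $\lambda\sim\Gamma(\alpha_k,\beta_k)$. The key point is that $\mathrm{RB}(\lambda,S_k)$ is a sum of three kinds of terms, each of the form $\lambda^{-1}e^{a\lambda}$ or $\lambda^{-1}$ times a constant, plus a constant. Each such term has a closed-form Gamma expectation. We write $(c/r)^{\lambda/\lambda_k}=e^{A\lambda}$ with $A=\frac{1}{\lambda_k}\ln\frac{c}{r}$, and $(c/r)^{\lambda N_k/(\lambda_k N_{k+1})}=e^{B\lambda}$ with $B$ as in the statement. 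The fraction $\frac{N_k+1}{N_k+1-\ln(c/r)}$ is exactly $C$ since $N_{k+1}=N_k+1$. With these substitutions,
\begin{align*}
\mathrm{RB}(\lambda,S_k)= r\,\frac{e^{A\lambda}}{\lambda} - rC\,\frac{e^{B\lambda}}{\lambda} + \frac{c\ln\frac{r}{c}}{\lambda_k N_{k+1}} - \frac{c\ln\frac{r}{c}}{N_{k+1}}\cdot\frac{1}{\lambda}.
\end{align*}
The last two terms come from splitting $\frac{\lambda-\lambda_k}{\lambda\lambda_k}=\frac{1}{\lambda_k}-\frac{1}{\lambda}$.

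Next we use linearity of expectation and one auxiliary identity. For $\alpha>1$, $\beta>0$ and $a<\beta$,
\begin{align*}
\mathbb{E}_{\lambda\sim\Gamma(\alpha,\beta)}\!\left[\frac{e^{a\lambda}}{\lambda}\right]=\frac{\beta^{\alpha}}{\Gamma(\alpha)}\int_0^\infty \lambda^{\alpha-2}e^{-(\beta-a)\lambda}\,d\lambda=\frac{\beta^\alpha\,\Gamma(\alpha-1)}{\Gamma(\alpha)(\beta-a)^{\alpha-1}}=\frac{\beta}{\alpha-1}\Bigl(\frac{\beta}{\beta-a}\Bigr)^{\alpha-1}.
\end{align*}
This follows by recognising an unnormalised $\Gamma(\alpha-1,\beta-a)$ density. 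Setting $a=0$ gives $\mathbb{E}[1/\lambda]=\beta/(\alpha-1)$. Applying the identity with $a=A$, $a=B$ and $a=0$ to the respective terms gives the first bracket of Equation~\eqref{eq:analytical MEB}. Collecting the two $c\ln\frac{r}{c}$ terms yields $\frac{c\ln(r/c)}{\lambda_kN_{k+1}}\bigl(1-\lambda_k\frac{\beta_k}{\alpha_k-1}\bigr)$.

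The main obstacle is justifying the integrals rather than the algebra. We need $\alpha_k>1$ for $\mathbb{E}[1/\lambda]$ to be finite. Under the flat prior $\alpha_k=N_k$, so $N_0=20$ makes this hold. We also need $\beta_k-A>0$ and $\beta_k-B>0$. These hold automatically because $c<r$ gives $\ln(c/r)<0$, so $A,B<0$ and the exponentials $e^{A\lambda}, e^{B\lambda}$ are bounded by $1$. Fubini and linearity then apply term by term, since each term is absolutely integrable once $\alpha_k>1$. We also check that $\lambda_k$ and $N_k$ are fixed given $S_k$, and are therefore constants under the posterior expectation.
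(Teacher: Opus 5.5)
Your proposal is correct and follows the paper's proof essentially step for step. It uses the same substitutions $e^{A\lambda}$ and $Ce^{B\lambda}$, the same three-term split with $\mathbb{E}[1/\lambda]=\beta_k/(\alpha_k-1)$ handling the linear term, and the same Gamma integral $\mathbb{E}[\lambda^{-1}e^{a\lambda}]=\frac{\beta}{\alpha-1}\bigl(\frac{\beta}{\beta-a}\bigr)^{\alpha-1}$ for $a<\beta$. Your one addition is the explicit integrability check, which the paper leaves implicit: $\alpha_k=N_k\ge 20>1$ under the flat prior, and $A,B<0$ so $\beta_k-A>0$ and $\beta_k-B>0$.
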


\begin{proof}
    See Appendix \ref{proof:analytical mean estimated benefit}. 
\end{proof}

We observe that the estimated real benefit may fluctuate across iterations and is not necessarily monotonically decreasing. Therefore, stopping immediately after a single low benefit estimate may lead to premature termination. To reduce this sensitivity to local fluctuations, we base the stopping decision on the average of the current estimated benefit and the benefit estimate from the previous iteration. We further apply this smoothing strategy to the proposed benefit-related stopping policies. 

In addition to the policies proposed in Section \ref{sec4:algorithm}, we include an oracle benchmark in the numerical study, referred to as Oracle. This benchmark evaluates the stopping decision using the true real benefit, computed under the true distributional parameter. Namely, the stopping indicator is $RB(\theta^*, S_k)$. In contrast, the proposed stopping policies rely on estimated distributional parameters obtained from the available data. Therefore, Oracle is not implementable in practice, but serves as a full-information reference that separates the effect of parameter estimation error from the stopping-rule design. We compare the implementable policies with this benchmark to quantify how much performance loss is caused by not knowing the true data-generating distribution.

\subsection{Stopping policy performance result} \label{subsec:stopping}
\new{
In this section, we empirically evaluate the performance of the five proposed stopping policies. We then compare them with three benchmarks, hindsight best, hindsight best fixed, and the Oracle stopping benchmark. The hindsight-best benchmark selects the best stopping time for each realized sample path, whereas the hindsight-best fixed benchmark selects the best common stopping time across all replications. The oracle benchmark assumes that the true distribution parameter is known and applies the stopping rule using the true real benefit.
}

The experimental settings are as follows: the maximum data collection budget is $B = 20$, the unit sampling cost is $c_s = 0.5$, and the credibility level is $95\%$ (i.e., $\gamma = 0.05$). The five policies are evaluated under identical random number streams in each replication. Each experiment is replicated 100 times. We report the mean and standard error of the performance metric $RESC_{\tau}$, where $\tau$ denotes the stopping iteration. The results are summarized in Table~\ref{tab:comparison results} and Figure~\ref{fig:stopping_results}.

\begin{table}[htb]
    \caption{Performance comparison results $RESC_\tau$ of stopping policies}
    \centering
    \begin{tabular}{ p{4cm} | p{4cm} | p{3cm} } 
         \hline
         \textbf{Stopping policy $\pi$} & \textbf{$RESC_{\tau_\pi}$}  & \textbf{Stopping time $\tau_\pi$} \\ \hline
          Fixed Budget (FB)             & 24.57 $\pm$ 0.70   &  40             \\ \hline 
          Parameter Mean (PM)           & 14.68  $\pm$ 2.60  &  0 \\  \hline 
          Parameter Interval (PI)       & 20.95 $\pm$ 0.81   &  31.08 $\pm$ 0.40 \\  \hline
          Measure Mean (MM)             & \textbf{12.23} $\pm$ \textbf{1.63} &  2.96 $\pm$ 0.21   \\  \hline
          Measure Interval (MI)         & 28.46 $\pm$ 0.64   & 48.92 $\pm$ 0.53   \\   \hline
          Oracle                        & 8.37 $\pm$ 0.99    & 4.01 $\pm$ 1.04  \\ \hline 
          Hindsight Best Bound          & 5.61 $\pm$ 0.82    & 5.35 $\pm$ 0.80 \\ \hline
    \end{tabular}
    \caption*{\footnotesize Note: results are reported as mean $\pm$ standard error. Bold values indicate the best performance among proposed stopping policies. The Oracle and Hindsight Best Bound are policy benchmarks.}
    \label{tab:comparison results}
\end{table}

\begin{figure}[htb]
    \centering
    \includegraphics[width=\textwidth]{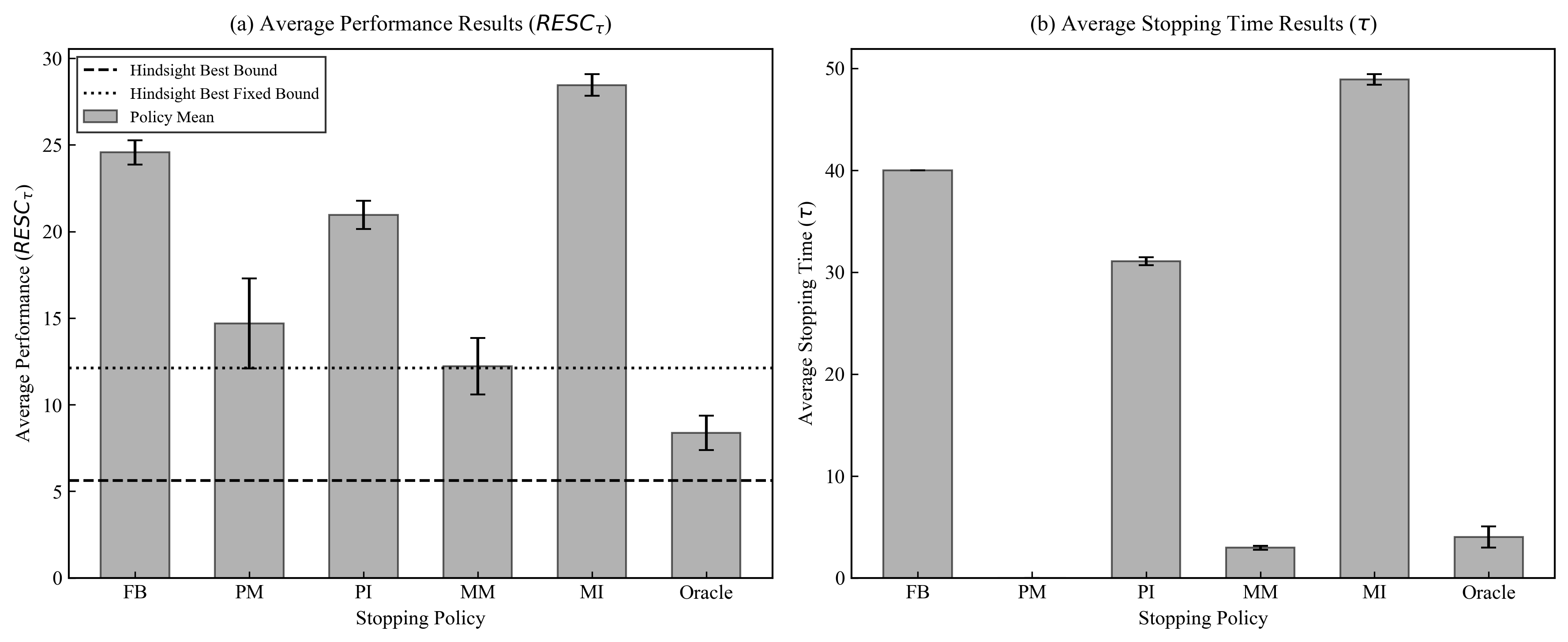}
    \caption{Performance Comparison of Stopping Policies. Panel (a) shows the performance metric $RESC_{\tau}$ relative to hindsight benchmarks (dashed lines). Panel (b) compares the average stopping time $\tau$ across policies. Error bars represent standard errors over 100 replications.}
    \label{fig:stopping_results}
\end{figure}

Several observations can be made from the results. First, the FB policy stops only when the data collection budget is exhausted, which corresponds to collecting \new{40 ($\frac{B}{c_s} = 40$) additional data points.} Second, among all policies, the MM policy achieves the best performance, yielding the smallest $RESC_{\tau}$. Third, the MI policy performs worst, since it stops by comparing the upper credibility bound of the estimated benefit with the unit sampling cost. Such a rule reflects a risk-averse decision maker who is willing to collect more data to reduce input uncertainty. Furthermore, we can see that the average stopping point of the MM policy (2.96 on average) is significantly smaller than that of the PI (31.08 on average). Besides, the average stopping result of the MM policy (12.23 on average) is less than that of the PI policy (20.69 on average). 

\new{To further evaluate the policies, we compare their performance with three benchmark bounds. 
The resulting value of the hindsight best bound is $5.35 \pm 0.80$. This bound represents the ideal performance that cannot be achieved in practice since it requires perfect hindsight. We can see that the MM policy achieves a performance closest to the hindsight best bound. We also compute the hindsight-best fixed benchmark, defined as the minimum average $RESC_{\tau}$ over all fixed stopping iterations. This benchmark achieves an average $RESC_{\tau}$ of 12.11. Our proposed MM and PM policies attain comparable performance, with values of 12.23 and 14.68, respectively. Notably, MM nearly matches the best fixed stopping rule chosen with hindsight, highlighting the value of adaptive stopping: even without hindsight information, the adaptive MM policy matches the performance of the best fixed stopping rule selected in hindsight. }

\new{We also compare the performance of proposed stopping policies with the Oracle benchmark. The Oracle policy takes the true distribution parameter $\theta^*$ when evaluating the real benefit of additional data collection. As expected, Oracle achieves an average ($RESC_{\tau}$) of (8.37) with an average stopping time of (4.01), outperforming all the proposed heuristic policies. Compared with MM, the best stopping policy, Oracle attains a lower ($RESC_{\tau}$) while stopping at a similar stage. This gap indicates that part of the performance loss of the proposed policies comes from uncertainty in estimating the true distribution parameter. However, Oracle still performs worse than the Hindsight Best Bound, which achieves $\mathrm{RESC}_\tau=5.61$. This is reasonable because Oracle has access to the true parameter but still makes stopping decisions sequentially, whereas the hindsight benchmark represents an ex-post lower bound. Therefore, Oracle serves as an intermediate benchmark between the proposed heuristic policies and the idealized hindsight best bound. Overall, these results indicate that the MM policy provides the most effective stopping decisions among the proposed policies.}

\subsection{Effect of Unknown True Parameter} \label{subsec:unknown true test}
In this subsection, we investigate the sensitivity of the stopping policies to the true distribution parameter $\lambda^*$. Specifically, we consider $\lambda^* \in \left\{\tfrac{1}{200},\tfrac{1}{160},\tfrac{1}{120}, \tfrac{1}{100},\tfrac{1}{80},\tfrac{1}{50}\right\}$. For each value of $\lambda^*$, the experiment is replicated 100 times. We report the average and standard error of the performance metric $RESC_{\tau_{\pi}}$ and the corresponding stopping time $\tau_{\pi}$. The results are presented in Figure~\ref{fig:results_vs_para} and Figure~\ref{fig:stop_vs_para}. 

\begin{figure}[htb!]
    \centering
    \includegraphics[width=1.0\textwidth]{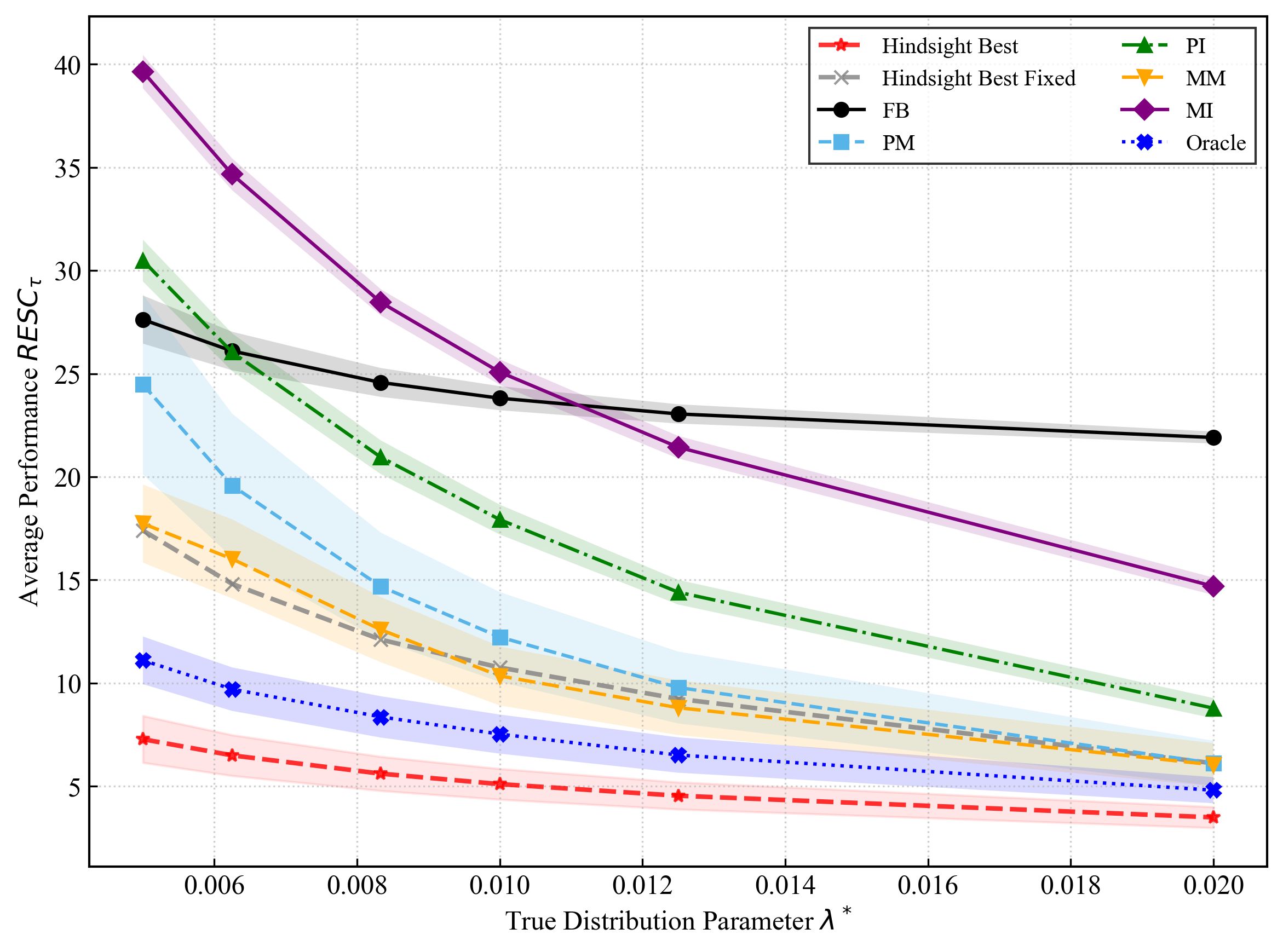}
    \caption{Average performance comparison of stopping policies $RESC_{\tau}$ under different values of true distribution parameter $\lambda^*$. The figure compares the average $RESC_\tau$ and standard errors (shaded area) for the five stopping policies (FB, PM, PI, MM, MI) against the Hindsight Best bound and Hindsight Best Fixed bound as the true parameter varies from a pre-identified set, $\lambda^* \in \left\{\tfrac{1}{200},\tfrac{1}{160},\tfrac{1}{120}, \tfrac{1}{100},\tfrac{1}{80},\tfrac{1}{50}\right\}$. Lower values indicate better performance.}
    \label{fig:results_vs_para}
\end{figure}

From Figure~\ref{fig:results_vs_para}, we can observe the following trends. First, all policies, including the hindsight-best benchmark, decrease monotonically as $\lambda^*$ increases. Since $\lambda^*$ is the reciprocal of the mean demand in the exponential distribution, a larger $\lambda^*$ corresponds to a smaller mean demand level. This reduces the magnitude of decision risk and leads to a smaller regret–sampling cost trade-off. However, 
how quickly regret approaches zero differs substantially across policies. The fixed-budget (FB) policy shows only a mild decrease and remains consistently dominated, reflecting its lack of adaptive stopping capability. In contrast, the adaptive policies (PI, PM, MM, and MI) demonstrate significantly stronger sensitivity to the problem parameter. Among the adaptive policies, the MM and PM policies consistently achieve the strongest performance as $\lambda^*$ increases and remain closest to the hindsight-best benchmark across most parameter regimes. In particular, the MM policy consistently outperforms the other feasible policies throughout the tested range of parameters. It achieves the lowest $RESC_{\tau}$ among all policies and maintains the smallest gap to the Hindsight Best bound. The MI policy exhibits relatively high $RESC_{\tau}$ when $\lambda^*$ is small (i.e., when the mean demand is large), suggesting potential over-exploration in highly uncertain environments. Nevertheless, its performance improves substantially as $\lambda^*$ increases. 

\new{The Oracle benchmark consistently outperforms all adaptive policies across the tested values of $\lambda^*$, showing the performance gain that could be achieved if the true distribution parameter were known. However, Oracle remains above the Hindsight Best benchmark because it still follows a sequential stopping rule rather than relying on ex-post information. The gap between Oracle and Hindsight Best narrows as $\lambda^*$ increases, indicating that the distribution parameter uncertainty has a larger effect when the mean demand is higher. Overall, these results demonstrate that adaptive, information-driven stopping policies are more sensitive to variations in the underlying distribution parameter, whereas non-adaptive policies suffer from persistent inefficiency across parameter regimes.} 

\begin{figure}[htb!]
    \centering
    \includegraphics[width=1.0\textwidth]{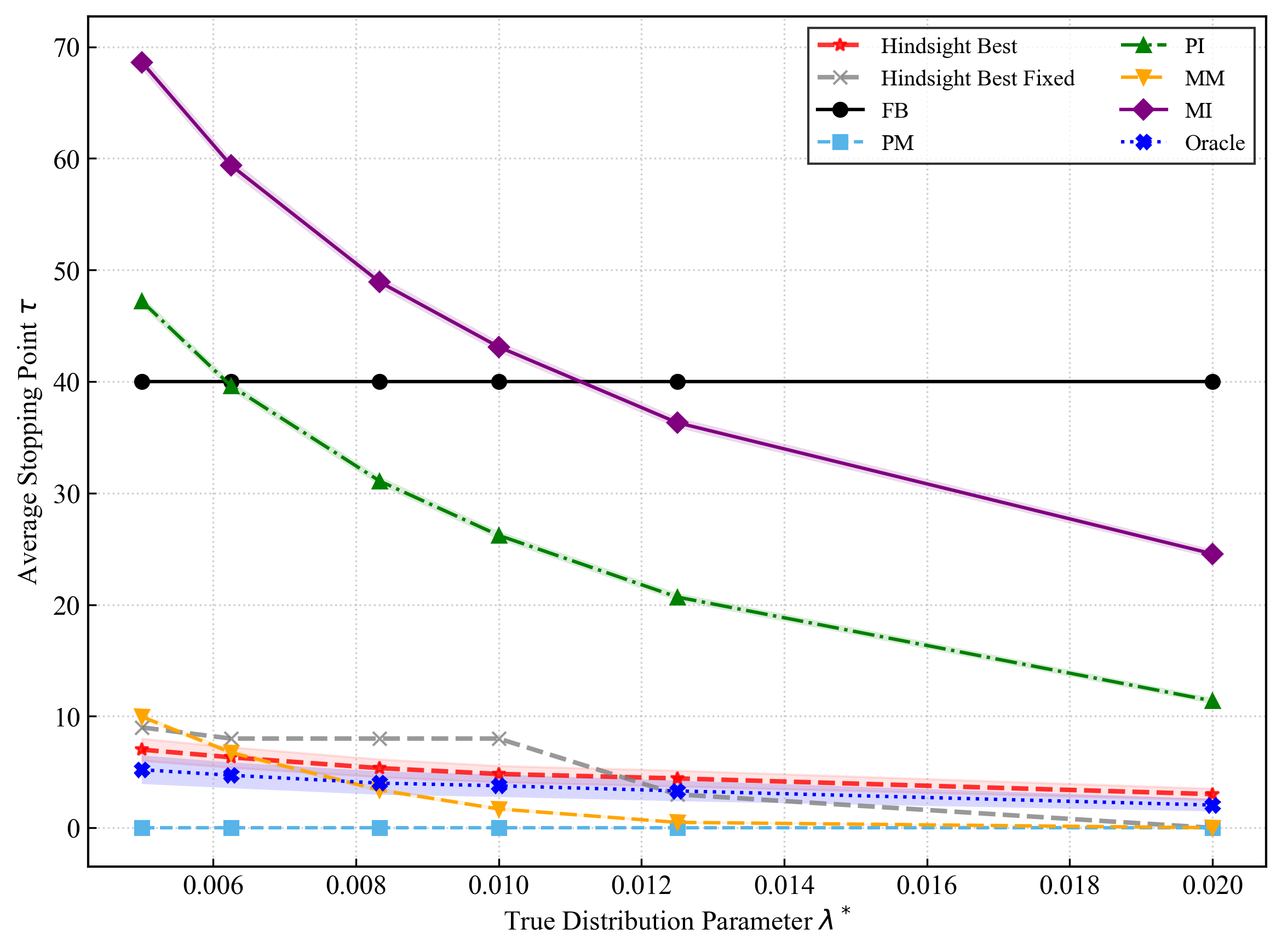}
    \caption{Average stopping time $\tau$ of each policy under different true distribution parameters $\lambda^*$. The figure compares the average stopping point $\tau$ and standard errors (shaded area) for the five stopping policies (FB, PM, PI, MM, MI) against the Hindsight Best bound and Hindsight Best Fixed bound as the true parameter varies from a pre-identified set, $\lambda^* \in \left\{\tfrac{1}{200},\tfrac{1}{160},\tfrac{1}{120}, \tfrac{1}{100},\tfrac{1}{80},\tfrac{1}{50}\right\}$. Lower values indicate stopping earlier.}
    \label{fig:stop_vs_para}
\end{figure}

From Figure~\ref{fig:stop_vs_para}, we can see several patterns. In terms of the stopping time, Oracle stops early across all values of $\lambda^*$ and its stopping point decreases as $\lambda^*$ increases. As expected, the stopping time of the FB policy remains constant across all parameter values. 

\new{For large $\lambda^*$, regret is less sensitive to parameter estimation error. Consequently, the adaptive policies require fewer observations before stopping data collection.} Third, the PI and MI policies consistently stop later than the other adaptive policies. This behavior arises from their credibility-interval–based stopping rules, which incorporate uncertainty in the parameter or benefit estimate. As a result, these policies tend to collect more data to ensure a reliable decision, reflecting a more conservative exploration strategy. While this conservative strategy reduces the risk of premature stopping, it also leads to higher sampling costs and therefore larger $RESC_\tau$ values, as shown in Figure~\ref{fig:results_vs_para}. 

Finally, the average stopping time of the MM policy decreases rapidly as $\lambda^*$ increases and approaches zero when the demand uncertainty becomes sufficiently small. \new{In addition, the MM policy terminates data collection significantly earlier than the other adaptive policies, except PM, across the entire range of $\lambda^*$.} Despite this aggressive stopping behavior, the MM policy consistently achieves the best trade-off between sampling cost and decision quality among the proposed policies, as the corresponding $RESC_\tau$ values remain very close to the hindsight-best benchmark in Figure~\ref{fig:results_vs_para}. This indicates that the MM policy is able to effectively identify when the marginal value of additional data becomes negligible, thereby avoiding unnecessary sampling while maintaining near-optimal decision performance. 

Taken together, the results on the performance metric $RESC_\tau$ and the stopping time $\tau$ reveal a clear relationship between sampling efficiency and policy performance. As the true parameter $\lambda^*$ increases, the average stopping time of the adaptive policies decreases substantially, indicating that fewer observations are required to reach a reliable decision when the underlying demand uncertainty becomes smaller. This reduction in sampling effort directly translates into improved performance, as reflected by the decreasing values of $RESC_\tau$ across all adaptive policies. Policies that directly evaluate the expected marginal value of additional data, such as the MM policy, are able to adapt their sampling effort more efficiently and achieve a near-optimal balance between decision quality and data collection cost.

\subsection{Effect of Data Collection Cost} \label{subsec:sampling cost test}
In the baseline setting, the unit sampling cost is set to $c_s = 0.5$. To examine the sensitivity of the stopping policies to sampling cost, we conduct experiments over a range of values $c_s \in [5\times10^{-2}, 10^{-1}, 0.5, 1.0, 2.0, 4.0, 5.0, 10.0, 20.0]$. The corresponding results are presented in Figure~\ref{fig:results_vs_samplecost}.

\begin{figure}[htb!]
    \centering
    \includegraphics[width=1.0\textwidth]{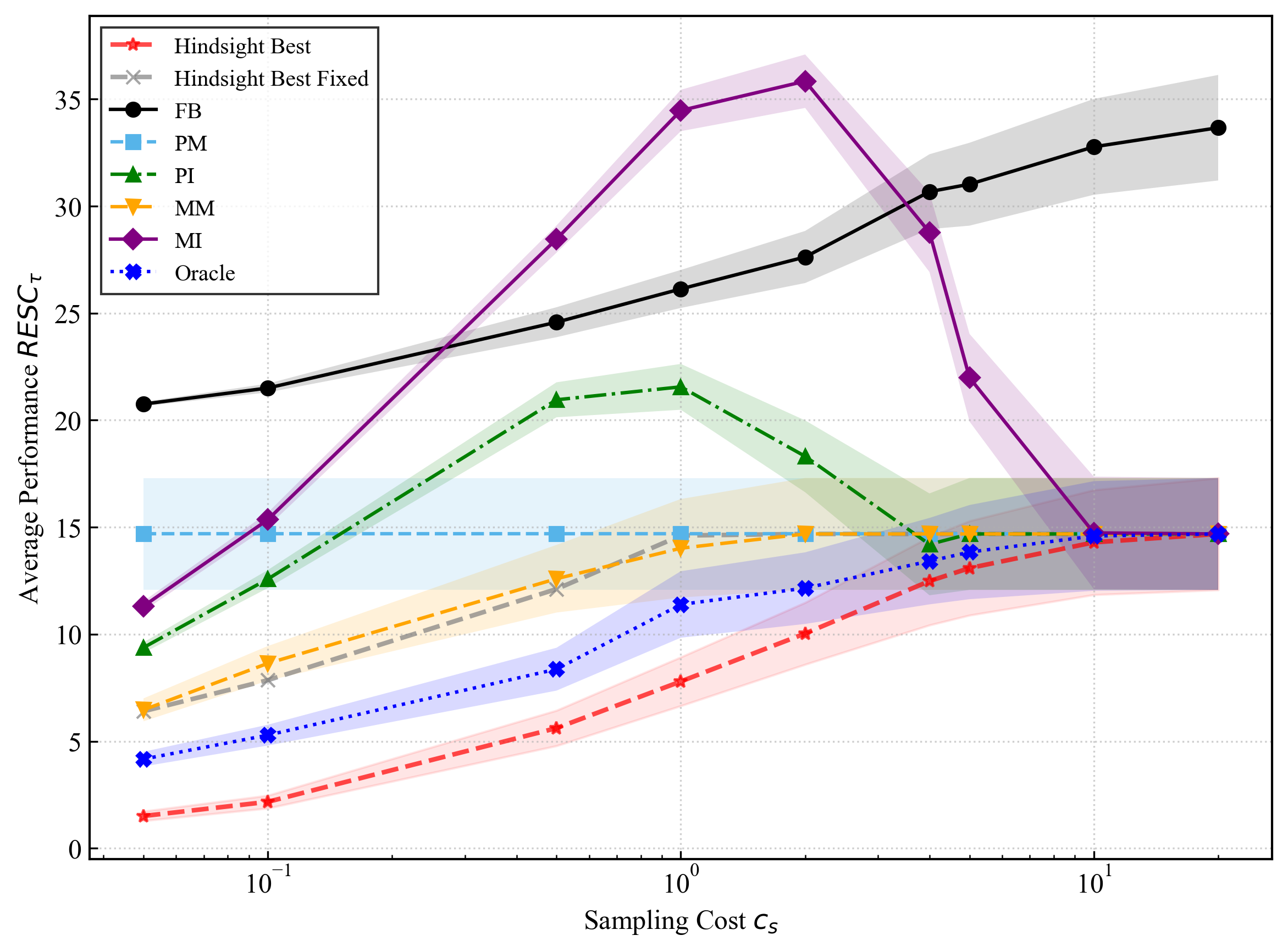}
    \caption{Performance ($RESC_\tau$) Comparison of Stopping Policies under different unit sampling cost $c_s$. The figure compares the average $RESC_\tau$ and standard errors for the five stopping policies (FB, PM, PI, MM, MI) against the Hindsight Best bound as the unit sampling cost varies from a pre-identified set, $[5\times10^{-2}, 10^{-1}, 0.5, 1.0, 2.0, 4.0, 5.0, 10.0, 20.0]$. Lower values indicate better performance.}
    \label{fig:results_vs_samplecost}
\end{figure}

\new{As expected, the $RESC_{\tau}$ of the hindsight-best benchmark increases monotonically with $c_s$, reflecting the direct contribution of sampling cost to the total regret by making data collection more expensive. The hindsight-best fixed benchmark remains relatively close to the proposed MM policy, while the Oracle consistently outperforms all adaptive stopping policies across the range of $c_s$. The performance of Oracle lies between the hindsight-best benchmark and the MM policy. } 

\new{The non-adaptive FB policy exhibits an increasing trend and remains consistently suboptimal under different cost regimes. As the sampling cost becomes sufficiently high, the performance gap between FB and the adaptive policies widens as $c_s$ increases, highlighting the inefficiency of non-adaptive data collection strategies when sampling becomes expensive.} The performance of the PM policy remains unchanged across different sampling costs. This occurs because the stopping rule under the PM policy never triggers additional data collection. In particular, the estimated marginal benefit under this policy remains below the unit sampling cost throughout the experiment, leading the policy to stop immediately after the initial data are observed. In contrast, the other adaptive policies (PI, MM, and MI) exhibit clear sensitivity to the sampling cost. When $c_s$ is small, the cost of acquiring additional data is relatively negligible, and policies that favor more aggressive exploration (such as MI and PI) can improve decision quality through additional sampling, resulting in lower values of $RESC_\tau$. \new{As $c_s$ increases, the cost of data collection increase; however, the marginal value of additional information remains the same. Under such conditions, policies that more carefully balance information gain and sampling cost (particularly the MM policy) demonstrate more stable and robust performance. The performance of MI and PI improves substantially as $c_s$ is sufficiently high. This pattern suggests that PI and MI tends to collect too much data when sampling is inexpensive, while its more conservative behavior becomes advantageous when data acquisition is costly.} 
We also observe that when the sampling cost is sufficiently high, all policies stop collecting additional data immediately, and their performance therefore converges to the same value as the three benchmarks. This is because the marginal benefit of collecting additional data is no longer sufficient to offset the high sampling cost under any policy.

\begin{figure}[htb!]
    \centering
    \includegraphics[width=1.0\textwidth]{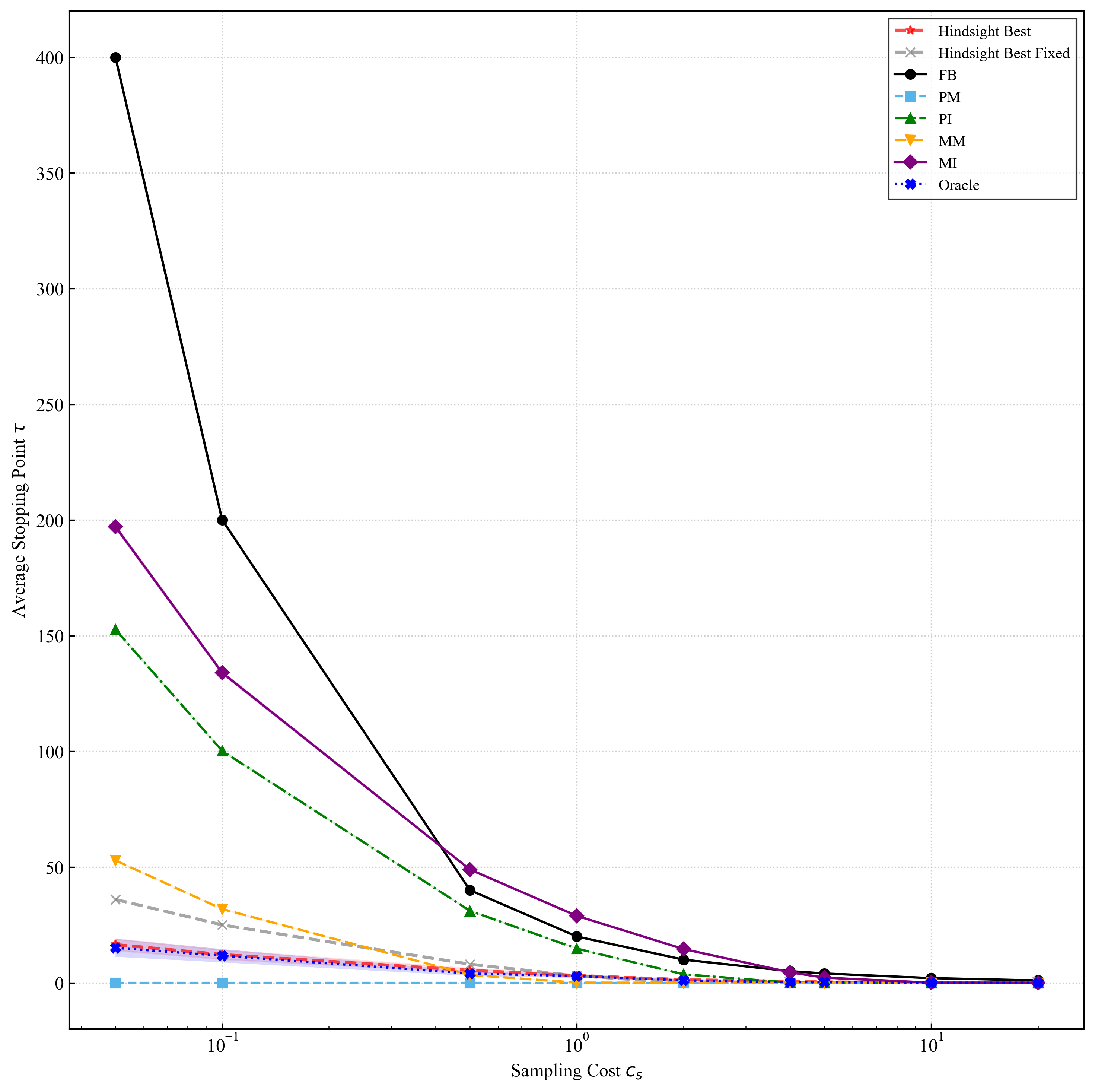}
    \caption{Stopping Point ($\tau$) Comparison of Stopping Policies under different unit sampling cost $c_s$. The figure compares the average $\tau$ and standard errors for the five stopping policies (FB, PM, PI, MM, MI) against the Hindsight Best and Hindsight Best Fixed bound as the unit sampling cost varies from a pre-identified set, $[5\times10^{-2}, 10^{-1}, 0.5, 1.0, 2.0, 4.0, 5.0, 10.0, 20.0]$. Lower values indicate early stopping.}
    \label{fig:stop_vs_samplecost}
\end{figure}

We also report the average stopping time $\tau$ under different unit sampling costs $c_s$ in Figure~\ref{fig:stop_vs_samplecost}. 
\new{As $c_s$ increases, the stopping time of all three benchmarks decrease. The Oracle stops at nearly the same time as the hindsight best benchmark, whereas the hindsight best fixed benchmark stops later. The relatively early stopping of the Oracle suggests that knowing the true distribution helps the policy evaluate the value of additional data more accurately than the adaptive stopping policies, thereby avoiding unnecessary sampling.}

Several observations can be made. First, the stopping time of all adaptive policies decreases sharply as the sampling cost increases. When the sampling cost is small, the marginal cost of acquiring additional data is negligible, which incentivizes policies to collect more observations before terminating the data collection process. As a result, the stopping points are relatively large in the low-cost regime. As $c_s$ increases, however, the cost of additional sampling becomes increasingly dominant, and all policies respond by terminating the data collection process earlier. Second, the FB policy exhibits the largest stopping times when the sampling cost is small. Since the FB policy continues sampling until the sampling budget is exhausted, it collects a large number of observations when the cost of sampling is low. As the sampling cost increases, however, the budget constraint becomes more restrictive, which substantially reduces the number of additional samples that can be collected.

Third, the adaptive policies display different exploration behaviors. The MI and PI policies tend to collect substantially more samples than the other adaptive policies when sampling costs are low, reflecting their more conservative stopping criteria that emphasize reducing parameter uncertainty. In contrast, the MM policy consistently stops earlier than PI and MI across most cost regimes, indicating that it more aggressively terminates sampling once the marginal benefit of additional information becomes small. Finally, when the sampling cost becomes sufficiently large (for example, $c_s = 20.0$), all policies converge to very small stopping times (nearly stopping immediately). In this regime, the cost of additional data dominates any potential benefit from improved parameter estimation, making early termination the economically rational decision. 

Overall, the results illustrate the fundamental tradeoff between learning and sampling cost. As the unit sampling cost increases, the optimal strategy shifts from exploration-intensive data collection toward earlier stopping. Policies that continue sampling excessively in low-cost regimes incur unnecessary sampling costs, while policies that terminate too early risk insufficient learning. The MM policy achieves the best compromise by avoiding excessive sampling in low-cost regimes while still terminating early when sampling becomes expensive. Across most cost regimes, the adaptive stopping policies achieves performance comparable to hindsight best fixed benchmark by dynamically adjusting their sampling decisions to the economic value of additional information. To conclude, the MM policy is the best stopping policy among the other policies since it consistently attains strong $RESC_\tau$ performance across a wide range of sampling cost levels.


\section{Conclusion and discussion} \label{sec7:conclusion}
This paper studies a sequential optimal stopping problem for data collection in stochastic programming under input uncertainty. We developed a regret-based framework that explicitly balances the expected value of additional information against the cost of acquiring more data. By modeling the evolution of uncertainty in a Bayesian learning framework and embedding it into a dynamic programming formulation, we characterize the stopping decision through a Bellman optimality condition. This condition admits a natural economic interpretation: stopping incurs the current expected regret, whereas continuing incurs an additional sampling cost together with the expected future regret. \new{Building on this principle, we propose five stopping policies,the nonadaptive Fixed Budget (FB) policy and four adaptive policies: the Parameter Mean (PM), Parameter Interval (PI), Measure Mean (MM), and Measure Interval (MI) policies. We also propose three benchmarks for policy performance comparison: hindsight best, hindsight best fixed, and the oracle stopping benchmark.}


The numerical experiments further demonstrate the practical value of the proposed approach. Across a broad range of settings, the adaptive stopping policies consistently outperform the non-adaptive fixed-budget rule by avoiding unnecessary data collection while maintaining \new{near hindsight best fixed benchmark decision performance. Among the proposed methods, the MM and PM policies deliver the strongest overall performance and remain closest to the three benchmarks across different values of the true distribution parameter and the unit sampling cost. Particularly, the proposed MM policy nearly matches the hindsight best fixed benchmark. This result demonstrates the value of adaptive data collection: our framework can dynamically determine when to stop sampling and achieve performance comparable to a hindsight-informed best fixed policy, without requiring the data collection budget to be specified in advance.} Overall, the results show that adaptive, information-driven stopping rules are more responsive to changes in the underlying environment and offer an effective approach for sequential data collection in stochastic optimization.

Several directions remain for future research. First, the current framework relies on a one-step look-ahead approximation, which evaluates the value of collecting only one additional observation at each stage. \new{Although this design yields tractable and effective policies, it may overlook situations in which the value of information emerges only after multiple additional samples are collected, which could arise in other problems.} Extending the framework to two-step or multi-step look-ahead policies is therefore a natural next step. Second, we assume a constant unit sampling cost, whereas many practical settings involve heterogeneous, state-dependent, or time-varying costs. Incorporating such cost structures would broaden the applicability of the model. Finally, applying the proposed framework to a wider range of domains, such as clinical trial design, and other data-intensive decision problems, would provide further empirical validation and help reveal additional modeling and computational challenges.

\newpage
\appendix
\section{Proof for Section \ref{sec6:numericalstudy}} 

\subsection{Proof of Lemma~\ref{lem:optimal_order}}\label{proof:optimal order}
\begin{proof} [Proof of Lemma~\ref{lem:optimal_order}]
With the objective formulation as 
\begin{align*}
    \mathbb{E}_{\xi \sim \mathrm{Exp}[\lambda]}[f(x,\xi)] = \frac{r}{\lambda} (1- \exp^{- \lambda x}) - cx. 
\end{align*}

Differentiate the objective formulation, then we get: 
\[
\frac{\partial}{\partial x}\Pi(x;\lambda)=re^{-\lambda x}-c.
\]
Setting to zero gives $re^{-\lambda x}=c$, i.e., $x=L/\lambda$, where $L=\ln(r/c)$. The second derivative is $-\lambda r e^{-\lambda x}<0$, so this is a maximum.
\end{proof}

\subsection{Proof of Lemma~\ref{lem:posterior_mean_update}}\label{proof:estimate update}
\begin{proof} [Proof of Lemma~\ref{lem:posterior_mean_update}] 
Under $\xi_i\mid \lambda {\sim}\mathrm{Exp}(\lambda)$, the likelihood for $\xi_{1:N_k}$ is
\[ p(\xi_{1:N_k}\mid \lambda)\propto \lambda^{N_k}\exp\!\big(-\lambda s_k\big). \]
Combining with the Gamma prior density $p(\lambda)\propto \lambda^{\alpha_0-1}e^{-\beta_0\lambda}$ yields the posterior
\[ p(\lambda\mid \xi_{1:N_k}) \propto \lambda^{\alpha_0+N_k-1}\exp\!\big(-(\beta_0+s_k)\lambda\big), \]
i.e., $\lambda\mid \xi_{1:N_k}\sim \Gamma(\alpha_0+N_k,\beta_0+s_k)$.

Hence, the posterior mean is
\[ \lambda_k=\mathbb{E}[\lambda\mid \xi_{1:N_k}] = \frac{\alpha_k}{\beta_k} = \frac{\alpha_0+N_k}{\beta_0+s_k}. \]

$d$ is generated as $d\sim \mathrm{Exp}(\lambda_k)$. After observing one additional sample $d$, the updated sufficient statistics become
$N_{k+1}=N_k+1$ and $S_{k+1}=s_k+d$, so by conjugacy
\[ \lambda\mid \xi_{1:N_k},d \sim \Gamma(\alpha_0+N_k+1,\beta_0+s_k+d), \]
and therefore \[ \lambda_{k+1}(d) =\frac{\alpha_0+N_k+1}{\beta_0+s_k+d}. \]

To express $\lambda_{k+1}(d)$ in terms of $\lambda_k$, note that $\lambda_k=(\alpha_0+N_k)/(\beta_0+s_k)$, we have: 

\begin{align*}
    \lambda_{k+1}(d) :=\mathbb{E}[\lambda \mid \xi_{1:N_k},d] =\frac{\alpha_0+N_k+1}{\beta_0+s_k+d} 
    =\lambda_k\cdot \frac{\alpha_0+N_k+1}{\alpha_0+N_k+\lambda_k d}.
\end{align*}
\end{proof}

\subsection{Proof of Proposition~\ref{prop:RE_closed_form}}\label{proof:analytical regret}

\begin{proof} [Proof of Proposition~\ref{prop:RE_closed_form}] 
For any $\lambda>0$, the expected objective value under $\xi\sim \mathrm{Exp}(\lambda)$ is 
\[\Pi(x;\lambda) :=\mathbb{E}_{\xi\sim \mathrm{Exp}(\lambda)}[f(x,\xi)] = r\frac{1-e^{-\lambda x}}{\lambda}-cx. \]

Therefore, \[ \mathrm{RE}(\lambda^*, S_k) = \Pi(x^*;\lambda^*)-\Pi(x_k;\lambda^*). \]

We first evaluate $\Pi(x^*;\lambda^*)$. Since $x^* = L/\lambda^*$, we have $e^{-\lambda^* x^*}=e^{-L}=c/r$. Hence
\begin{align*}
\Pi(x^*;\lambda^*)
&= r\frac{1-e^{-\lambda^* x^*}}{\lambda^*}-cx^*\\
&= r\frac{1-\frac{c}{r}}{\lambda^*}-c\frac{L}{\lambda^*}
= \frac{r-c}{\lambda^*}-\frac{cL}{\lambda^*}.
\end{align*}

We then evaluate the second term $\Pi(x_k;\lambda^*)$
Using $x_k=L/\lambda_k$,
\[\Pi(x_k;\lambda^*) = r\frac{1-e^{-\lambda^* x_k}}{\lambda^*}-cx_k. \]

Moreover,
\[ e^{-\lambda^* x_k} = e^{-\lambda^* L/\lambda_k} = \big(e^{-L}\big)^{\lambda^*/\lambda_k} = \left(\frac{c}{r}\right)^{\lambda^*/\lambda_k}. \]

Thus, \[\Pi(x_k;\lambda^*) = \frac{r}{\lambda^*}\left(1-\left(\frac{c}{r}\right)^{\lambda^*/\lambda_k}\right) -c\frac{L}{\lambda_k}. \]

We then subtract and simplify the analytical formulation for the regret. 
Compute
\begin{align*}
\mathrm{RE}(\lambda^*, S_k) 
& =\Pi(x^*;\lambda^*)-\Pi(x_k;\lambda^*) \\
& = \left(\frac{r-c}{\lambda^*}-\frac{cL}{\lambda^*}\right) - \left[ \frac{r}{\lambda^*}\left(1-\left(\frac{c}{r}\right)^{\lambda^*/\lambda_k}\right) -c\frac{L}{\lambda_k} \right]\\
& = \frac{r}{\lambda^*}\left(\frac{c}{r}\right)^{\lambda^*/\lambda_k} -\frac{c}{\lambda^*} +\left(c\frac{L}{\lambda_k}-c\frac{L}{\lambda^*}\right).
\end{align*}

Finally,
\[ c\frac{L}{\lambda_k}-c\frac{L}{\lambda^*}
= cL\left(\frac{1}{\lambda_k}-\frac{1}{\lambda^*}\right)
= c\left(\frac{\lambda^*-\lambda_k}{\lambda_k\lambda^*}\right)\ln\!\Big(\frac{r}{c}\Big), \]

which yields the closed form for the regret. 
\end{proof}

\subsection{Proof of Proposition~\ref{prop:RB_closed_form}}\label{proof:analytical benefit}

\begin{proof} [Proof of Proposition~\ref{prop:RB_closed_form}]
Starting from the real benefit definition, the inner expectation over $\xi$ can be written using the expected profit function
\[
\Pi(x;\lambda) := \mathbb{E}_{\xi\sim \mathrm{Exp}(\lambda)}[f(x,\xi)]
= r\frac{1-e^{-\lambda x}}{\lambda}-cx.
\]
Hence,
\begin{align*}
   \mathrm{RB}(\lambda^*, S_k)
    & = \mathbb{E}_{d\sim \mathrm{Exp}(\lambda^*)} \Big[\Pi(x_{k+1}(d);\lambda^*) - \Pi(x_k;\lambda^*) \Big]. 
\end{align*}
Recall that $L:=\ln\frac{r}{c}>0$, and the current decision is \[x_k=\frac{L}{\lambda_k}.\]

After observing one additional data point $d$, the updated estimator is
\[ \lambda_{k+1}(d)=\frac{\lambda_k (N_k+1)}{N_k+\lambda_k d}, \qquad d\sim \mathrm{Exp}(\lambda^*), \]
and therefore the updated decision satisfies
\begin{align*}
    \frac{1}{\lambda_{k+1}(d)} = \frac{1}{\lambda_k}\cdot \frac{N_k+\lambda_k d}{N_k+1}.
\end{align*}

We first evaluate $\Pi(x_{k+1}(d);\lambda^*)$. Using the above expression for $x_{k+1}(d)$, we have
\begin{align*}
   \Pi(x_{k+1}(d);\lambda^*) = \frac{r}{\lambda^*}(1-e^{-\lambda^*x_{k+1}(d)}) - cx_{k+1}(d)
\end{align*}

We then evaluate $\Pi(x_k;\lambda^*)$: 
\begin{align*}
    \Pi(x_k;\lambda^*) = \frac{r}{\lambda^*}(1-e^{-\lambda^* x_{k}}) - cx_{k}
\end{align*}

Subtracting the two terms yields 
\begin{align*}
    \Pi(x_{k+1}(d);\lambda^*)-\Pi(x_k;\lambda^*) = \frac{r}{\lambda^*}(e^{-\lambda^* x_{k}} -e^{-\lambda^* x_{k+1}(d)}) + c\left(x_{k} - x_{k+1}(d)\right)
\end{align*}

With $x_k = \frac{\ln\frac{r}{c}}{\lambda_k}$, $x_{k+1}(d) = \frac{\ln\frac{r}{c}}{\lambda_{k+1}(d)}$, and the relationship between $\lambda_{k+1}(d)$ and $\lambda_k$, we have: 
\begin{align*}
    \Pi(x_{k+1}(d);\lambda^*)-\Pi(x_k;\lambda^*) = \frac{r}{\lambda^*}((\frac{c}{r})^ \frac{\lambda^*}{\lambda_k} -(\frac{c}{r})^ \frac{\lambda^* (N_k + \lambda_k d)}{\lambda_k (N_k+1)}) + c(\frac{1-\lambda_k d}{\lambda_k (N_k +1) })\ln\frac{r}{c}
\end{align*}

Then we take the expectation of the difference since $ d\sim \mathrm{Exp}(\lambda^*)$, we have: 
\begin{align*}
    RB(\lambda^*, S_k) = & \mathbb{E}_{d \sim \mathbb{P}[\lambda^*]} [\Pi(x_{k+1}(d);\lambda^*)-\Pi(x_k;\lambda^*)] \\
    = & \mathbb{E}_{d \sim \mathbb{P}[\lambda^*]} [ \frac{r}{\lambda^*}((\frac{c}{r})^ \frac{\lambda^*}{\lambda_k} -(\frac{c}{r})^ \frac{\lambda^* (N_k + \lambda_k d)}{\lambda_k (N_k+1)}) + c(\frac{1-\lambda_k d}{\lambda_k (N_k+1)})\ln\frac{r}{c}] \\
    = & \frac{r}{\lambda^*} \left( \left(\frac{c}{r}\right)^{\frac{\lambda^*}{\lambda_k}} - \left(\frac{c}{r}\right)^{\frac{\lambda^* N_k}{\lambda_k (N_k+1)}} \frac{N_k+1}{N_k+1-\ln\!\left(\frac{c}{r}\right)} \right) + c\left(\frac{\lambda^* - \lambda_k}{\lambda^* \lambda_k(N_k+1)}\right)\ln\!\left(\frac{r}{c}\right). 
\end{align*}

\end{proof}

\subsection{Proof of Proposition~\ref{prop:meanRB_closed_form}}\label{proof:analytical mean estimated benefit}

\begin{proof}[Proof of Proposition~\ref{prop:meanRB_closed_form}] 
When the true distribution parameter is unknown, we have estimated benefit $RB(\lambda, S_k)$. We have
\[
\mathrm{RB}(\lambda, S_k) = \frac{r}{\lambda}((\frac{c}{r})^ \frac{\lambda}{\lambda_k} -(\frac{c}{r})^ \frac{\lambda N_k}{\lambda_k N_{k+1}} \frac{N_{k+1}}{N_{k+1} - \ln \frac{c}{r}}) + c\ln\frac{r}{c}(\frac{\lambda - \lambda_k}{\lambda \lambda_k N_{k+1}})
\]
Assume the distribution parameter $\lambda$ follows a posterior Gamma distribution
\[
\lambda \sim \mathrm{Gamma}(\alpha,\beta),
\quad f_\lambda(\lambda)=\frac{\beta^\alpha}{\Gamma(\alpha)}\,\lambda^{\alpha-1}e^{-\beta\lambda}.
\]

Define
\[
A=\frac{1}{\lambda_k}\ln\frac{c}{r},
\quad B=\frac{N_k}{\lambda_kN_{k+1}}\ln\frac{c}{r},
\quad C=\frac{N_{k+1}}{N_{k+1}-\ln(c/r)},
\quad D=c\ln\frac{r}{c}.
\]
Hence \[ (\frac{c}{r})^\frac{\lambda}{\lambda_k} = e^{A\lambda},
\quad
(\frac{c}{r})^ \frac{\lambda N_k}{\lambda_k N_{k+1}} \frac{N_{k+1}}{N_{k+1} - \ln \frac{c}{r}} = Ce^{B\lambda}
\]
Then
\[
\mathrm{RB}(\lambda, S_k)
=\frac{r}{\lambda}\bigl(e^{A\lambda}-C\,e^{B\lambda}\bigr)
+ D\frac{\lambda-\lambda_k}{\lambda\,\lambda_k\,N_{k+1}}.
\]

We first compute
\[
MRB = \mathbb{E}[\mathrm{RB}(\lambda, S_k)]
=\int_0^\infty \mathrm{RB}(\lambda_k,\lambda)\,f_\lambda(\lambda)\,d\lambda.
\]
Split into terms:
\[
E_1 = r\int_0^\infty \frac{1}{\lambda}e^{A\lambda}f_\lambda(\lambda)d\lambda,
\quad
E_2 = rC\int_0^\infty \frac{1}{\lambda}e^{B\lambda}f_\lambda(\lambda)d\lambda,
\]
\[
E_3 = D\int_0^\infty \frac{\lambda-\lambda_k}{\lambda\,\lambda_k\,N_{k+1}}f_\lambda(\lambda)d\lambda.
\]

Using the standard integral
\[
\int_0^\infty \lambda^{\nu-1}e^{-s\lambda}\,d\lambda=\frac{\Gamma(\nu)}{s^\nu},
\quad \Re(s)>0,\;\Re(\nu)>0,
\]
we have for $a<\beta$:
\[
\int_0^\infty \frac{1}{\lambda}e^{a\lambda}f_\lambda(\lambda)d\lambda
=\frac{\beta^\alpha}{\Gamma(\alpha)}\int_0^\infty \lambda^{\alpha-2}e^{-(\beta-a)\lambda}d\lambda
=\frac{\beta}{\alpha-1}\Bigl(\frac{\beta}{\beta-a}\Bigr)^{\alpha-1}.
\]
Thus
\[
E_1 = r\frac{\beta}{\alpha-1}\Bigl(\frac{\beta}{\beta - A}\Bigr)^{\alpha-1},
\quad
E_2 = rC\frac{\beta}{\alpha-1}\Bigl(\frac{\beta}{\beta - B}\Bigr)^{\alpha-1}.
\]
For $E_3$, note
\[
\int_0^\infty \frac{1}{\lambda}f_\lambda(\lambda)d\lambda=\frac{\beta}{\alpha-1},
\quad
\int_0^\infty f_\lambda(\lambda)d\lambda=1.
\]
Hence
\[
E_3 = \frac{D}{\lambda_k N_{k+1}}\Bigl(1 - \lambda_k\frac{\beta}{\alpha-1}\Bigr).
\]
Combining,
\[
MRB = \mathbb{E}_{\lambda \sim \Gamma(\alpha, \beta)}[\mathrm{RB}(\lambda, S_k)]
= r\Bigl[\frac{\beta}{\alpha-1}\Bigl(\tfrac{\beta}{\beta - A}\Bigr)^{\alpha-1}
- C\frac{\beta}{\alpha-1}\Bigl(\tfrac{\beta}{\beta - B}\Bigr)^{\alpha-1}\Bigr]
+ \frac{c\ln(r/c)}{\lambda_kN_{k+1}}\Bigl(1-\lambda_k\frac{\beta}{\alpha-1}\Bigr).
\]

\end{proof}

\newpage

\bibliographystyle{agsm}
\bibliography{references}

@article{xu2023decision,
  title={Decision making under costly sequential information acquisition: the paradigm of reversible and irreversible decisions},
  author={Xu, Renyuan and Zariphopoulou, Thaleia and Zhang, Luhao},
  journal={arXiv preprint arXiv:2401.00569},
  year={2023}
}

@article{fu2010endogenous,
  title={Endogenous information acquisition in supply chain management},
  author={Fu, Qi and Zhu, Kaijie},
  journal={European Journal of Operational Research},
  volume={201},
  number={2},
  pages={454--462},
  year={2010},
  publisher={Elsevier},
  doi = {10.1016/j.ejor.2009.03.019}
}

@article{besbes2013implications,
  title={On implications of demand censoring in the newsvendor problem},
  author={Besbes, Omar and Muharremoglu, Alp},
  journal={Management Science},
  volume={59},
  number={6},
  pages={1407--1424},
  year={2013},
  publisher={INFORMS}, 
  doi = {https://doi.org/10.1287/mnsc.1120.1654}
}

@article{he2024stochastic,
  title={Stochastic Approximation for Multi-period Simulation Optimization with Streaming Input Data},
  author={He, Linyun and Shanbhag, Uday V and Song, Eunhye},
  journal={ACM Transactions on Modeling and Computer Simulation},
  volume={34},
  number={2},
  pages={1--27},
  year={2024},
  publisher={ACM New York, NY}
}

@inproceedings{barton1993bootstrap,
  title={Uniform and bootstrap resampling of empirical distributions},
  author={Barton, Russell R and Schruben, Lee W},
  booktitle={Proceedings of the 25th conference on Winter simulation},
  pages={503--508},
  year={1993}
}

@article{cheng1997delta,
  title={Sensitivity of computer simulation experiments to errors in input data},
  author={Cheng, Russell CH and Holloand, Wayne},
  journal={Journal of Statistical Computation and Simulation},
  volume={57},
  number={1-4},
  pages={219--241},
  year={1997},
  publisher={Taylor \& Francis}
}

@article{avriel1970value,
  title={The value of information and stochastic programming},
  author={Avriel, Mordecai and Williams, AC},
  journal={Operations Research},
  volume={18},
  number={5},
  pages={947--954},
  year={1970},
  publisher={INFORMS}
}

@article{birge1997stochastic,
  title={State-of-the-art-survey—stochastic programming: Computation and applications},
  author={Birge, John R},
  journal={INFORMS journal on computing},
  volume={9},
  number={2},
  pages={111--133},
  year={1997},
  publisher={INFORMS}
}

@article{hausch1983bounds,
  title={Bounds on the value of information in uncertain decision problems II},
  author={Hausch, DB and Ziemba, WT},
  journal={Stochastics},
  volume={10},
  number={3-4},
  pages={181--217},
  year={1983},
  publisher={Taylor \& Francis}
}

@book{kall1994stochastic,
  title={Stochastic programming},
  author={Kall, Peter and Wallace, Stein W and Kall, Peter},
  volume={5},
  year={1994},
  publisher={Springer}
}

@article{powell2019unified,
  title={A unified framework for stochastic optimization},
  author={Powell, Warren B},
  journal={European journal of operational research},
  volume={275},
  number={3},
  pages={795--821},
  year={2019},
  publisher={Elsevier}
}

@Article{robbins1951stochastic,
  author        = {Robbins, Herbert and Monro, Sutton},
  journal       = {The annals of mathematical statistics},
  title         = {A stochastic approximation method},
  year          = {1951},
  pages         = {400--407},
  comment-lixin = {The first paper of the stochastic gradient descent for stochatic programming approach},
  publisher     = {JSTOR},
  ranking       = {rank5},
}

@article{dantzig1955linear,
  author  = {Dantzig, George B.},
  title   = {Linear programming under uncertainty},
  journal = {Management Science},
  volume  = {1},
  number  = {3--4},
  pages   = {197--206},
  year    = {1955},
  doi     = {10.1287/mnsc.1.3-4.197}
}

@book{birge2011introduction,
  author    = {Birge, John R. and Louveaux, Fran{\c{c}}ois},
  title     = {Introduction to stochastic programming},
  publisher = {Springer},
  address   = {New York},
  edition   = {2},
  year      = {2011},
  doi       = {10.1007/978-1-4614-0237-4}
}

@book{shapiro2021lectures,
  title={Lectures on stochastic programming: modeling and theory},
  author={Shapiro, Alexander and Dentcheva, Darinka and Ruszczynski, Andrzej},
  year={2021},
  publisher={SIAM}
}

@article{kleywegt2002sample,
  title={The sample average approximation method for stochastic discrete optimization},
  author={Kleywegt, Anton J and Shapiro, Alexander and Homem-de-Mello, Tito},
  journal={SIAM Journal on optimization},
  volume={12},
  number={2},
  pages={479--502},
  year={2002},
  publisher={SIAM}
}

@article{kim2014guide,
  title={A guide to sample average approximation},
  author={Kim, Sujin and Pasupathy, Raghu and Henderson, Shane G},
  journal={Handbook of simulation optimization},
  pages={207--243},
  year={2014},
  publisher={Springer}
}

@article{pereira1991multi,
  title={Multi-stage stochastic optimization applied to energy planning},
  author={Pereira, Mario VF and Pinto, Leontina MVG},
  journal={Mathematical programming},
  volume={52},
  number={1},
  pages={359--375},
  year={1991},
  publisher={Springer}
}

@article{seljom2021sample,
  title={Sample average approximation and stability tests applied to energy system design},
  author={Seljom, Pernille and Tomasgard, Asgeir},
  journal={Energy Systems},
  volume={12},
  number={1},
  pages={107--131},
  year={2021},
  publisher={Springer}
}

@article{sodhi2009modeling,
  title={Modeling supply-chain planning under demand uncertainty using stochastic programming: A survey motivated by asset--liability management},
  author={Sodhi, ManMohan S and Tang, Christopher S},
  journal={International Journal of Production Economics},
  volume={121},
  number={2},
  pages={728--738},
  year={2009},
  publisher={Elsevier}
}

@article{dupavcova2003scenario,
  title={Scenario reduction in stochastic programming},
  author={Dupavcova, Jitka and Growe-Kuska, Nicole and Romisch, Werner},
  journal={Mathematical programming},
  volume={95},
  number={3},
  pages={493--511},
  year={2003},
  publisher={Springer}
}

@article{mak1999monte,
  title={Monte Carlo bounding techniques for determining solution quality in stochastic programs},
  author={Mak, Wai-Kei and Morton, David P and Wood, R Kevin},
  journal={Operations research letters},
  volume={24},
  number={1-2},
  pages={47--56},
  year={1999},
  publisher={Elsevier}
}

@article{choi2025generative,
  title={A generative AI-based stochastic optimization framework for optimal site selection and design of power-to-MeOH systems},
  author={Choi, Hyunjun and Kim, Youngkeun and Kim, Jeongdong and Lee, Sungmin and Lee, Man Sig and Kim, Junghwan},
  journal={International Journal of Hydrogen Energy},
  volume={190},
  pages={152114},
  year={2025},
  publisher={Elsevier}
}

@article{bertsimas2023optimization,
  title={Optimization-based scenario reduction for data-driven two-stage stochastic optimization},
  author={Bertsimas, Dimitris and Mundru, Nishanth},
  journal={Operations Research},
  volume={71},
  number={4},
  pages={1343--1361},
  year={2023},
  publisher={INFORMS}
}

@InProceedings{hesong2024introductory,
  author        = {He, Linyun and Song, Eunhye},
  booktitle     = {2024 Winter Simulation Conference (WSC)},
  title         = {Introductory Tutorial: Simulation Optimization Under Input Uncertainty},
  year          = {2024},
  pages         = {1338-1352},
  comment-lixin = {A literature review of the simulation optimization under input data uncertainty},
  doi           = {10.1109/WSC63780.2024.10838862},
}

@article{morton1998stopping,
  title={Stopping rules for a class of sampling-based stochastic programming algorithms},
  author={Morton, David P},
  journal={Operations research},
  volume={46},
  number={5},
  pages={710--718},
  year={1998},
  publisher={INFORMS}
}

@article{pichler2022risk,
  title={Risk-averse stochastic programming: Time consistency and optimal stopping},
  author={Pichler, Alois and Liu, Rui Peng and Shapiro, Alexander},
  journal={Operations Research},
  volume={70},
  number={4},
  pages={2439--2455},
  year={2022},
  publisher={INFORMS}
}

@article{bayraksan2011sequential,
  title={A sequential sampling procedure for stochastic programming},
  author={Bayraksan, G{\"u}zin and Morton, David P},
  journal={Operations Research},
  volume={59},
  number={4},
  pages={898--913},
  year={2011},
  publisher={INFORMS}
}

@article{oh2016characterizing,
  title={Characterizing the structure of optimal stopping policies},
  author={Oh, Sechan and {\"O}zer, {\"O}zalp},
  journal={Production and Operations Management},
  volume={25},
  number={11},
  pages={1820--1838},
  year={2016},
  publisher={SAGE Publications Sage CA: Los Angeles, CA}
}

@article{ciocan2022interpretable,
  title={Interpretable optimal stopping},
  author={Ciocan, Dragos Florin and Mi{\v{s}}i{\'c}, Velibor V},
  journal={Management Science},
  volume={68},
  number={3},
  pages={1616--1638},
  year={2022},
  publisher={INFORMS}
}

@inproceedings{lam2016advanced,
  title={Advanced tutorial: Input uncertainty and robust analysis in stochastic simulation},
  author={Lam, H.},
  booktitle={2016 Winter Simulation Conference (WSC)},
  pages={178--192},
  year={2016},
  organization={IEEE}
}

@inproceedings{wang2023input,
  title={Input Data Collection Versus Simulation: Simultaneous Resource Allocation},
  author={Wang, Y. and Zhou, E.},
  booktitle={2023 Winter Simulation Conference (WSC)},
  pages={3657--3668},
  year={2023},
  organization={IEEE}
}

@article{ungredda2022bayesian,
  title={{B}ayesian optimisation vs. input uncertainty reduction},
  author={Ungredda, J. and Pearce, M. and Branke, J.},
  journal={ACM Transactions on Modeling and Computer Simulation (TOMACS)},
  volume={32},
  number={3},
  pages={1--26},
  year={2022},
  publisher={ACM New York, NY}
}

@inproceedings{lam2015quantifying,
  title={Quantifying uncertainty in sample average approximation},
  author={Lam, H. and Zhou, E.},
  booktitle={2015 Winter Simulation Conference (WSC)},
  pages={3846--3857},
  year={2015},
  organization={IEEE}
}

@inproceedings{song2014advanced,
  title={Advanced tutorial: Input uncertainty quantification},
  author={Song, E. and Nelson, B.L. and Pegden, C.D.},
  booktitle={Proceedings of the Winter Simulation Conference 2014},
  pages={162--176},
  year={2014},
  organization={IEEE}
}

@article{chick2001input,
  title={Input distribution selection for simulation experiments: Accounting for input uncertainty},
  author={Chick, S.E.},
  journal={Operations Research},
  volume={49},
  number={5},
  pages={744--758},
  year={2001},
  publisher={INFORMS}
}

@inproceedings{song2019stochastic,
  title={Stochastic approximation for simulation optimization under input uncertainty with streaming data},
  author={Song, E. and Shanbhag, U.V.},
  booktitle={2019 Winter Simulation Conference (WSC)},
  pages={3597--3608},
  year={2019},
  organization={IEEE}
}

@article{liu2019online,
  title={Online quantification of input model uncertainty by two-layer importance sampling},
  author={Liu, T. and Zhou, E.},
  journal={arXiv preprint arXiv:1912.11172},
  year={2019}
}

@article{aigner2023data,
  title={Data-driven distributionally robust optimization over time},
  author={Aigner, K.M. and B{\"a}rmann, A. and Braun, K. and Liers, F. and Pokutta, S. and Schneider, O. and Sharma, K. and Tschuppik, S.},
  journal={INFORMS Journal on Optimization},
  volume={5},
  number={4},
  pages={376--394},
  year={2023},
  publisher={INFORMS}
}

\end{document}